\documentclass[12pt, a4paper]{article}

\usepackage{amsmath, amssymb, amsthm}
\usepackage{array}
\usepackage{booktabs}
\usepackage{geometry}
\usepackage{hyperref}
\usepackage{caption}
\hypersetup{colorlinks=true, linkcolor=blue, urlcolor=blue}
\usepackage{tikz}
\usepackage{float}
\usetikzlibrary{positioning, calc}

\usepackage{geometry}
\usepackage{makecell}

\newtheorem{definition}{Definition}[section]
\newtheorem{example}{Example}[section]
\newtheorem{remark}{Remark}[section]
\newtheorem{theorem}{Theorem}[section]
\newtheorem{proposition}{Proposition}[section]
\newtheorem{corollary}{Corollary}[section]
\newtheorem{problem}{Problem}[section]

\title{\textbf{On the Word-Representability of Tensor Product Graphs}}
\author{Nawaf Shafi Alshammari\footnote{Department of Mathematics, Faculty of Science, University of Hafr Al Batin, Hafr Al Batin 31991, Saudi Arabia. {\bf Email:} nsalshammari@uhb.edu.sa} \footnote{Department of Mathematics and Statistics, University of Strathclyde, 26 Richmond Street, Glasgow G1, 1XH, United Kingdom. {\bf Email:} nawaf.alshammari@strath.ac.uk}}

\begin{document}
\maketitle

\begin{abstract}
Word-representable graphs are a class of graphs that can be represented by words, where edges and non-edges are determined by the alternation of letters in those words. The tensor product \(G \times H\) (also known as the direct product or Kronecker product) is one of the four standard graph products. Problem 7.2.5 in Kitaev and Lozin's book \emph{Words and Graphs} (Springer, 2015) raised three open questions regarding the word-representability of tensor products.

Despite extensive research on word-representable graphs, the word-representability of tensor products seems to have received no attention. This paper not only answers 2.5 of the questions posed by Kitaev and Lozin in Problem 7.2.5, but also initiates a systematic study, focusing on four fundamental families: wheel graphs \(W_n\), complete graphs \(K_n\), the Mycielskian of the cycle graph \(\mu_n\), and the extended Mycielskian of the cycle graph \(\mu'_n\).

Among our main results, we prove that \(W_{2n} \times G\), \(\mu_{2n} \times G\),  and \(\mu'_{2n} \times G\) are always word-representable for any graph \(G\); that \(K_n \times K_m\) is word-representable if and only if \(\min\{n,m\} \leq 3\); and that tensor products \(G \times H\) containing \(W_{2n+1} \times W_{2m+1}\) or \(\mu_{2n+1} \times \mu_{2m+1}\) or \(\mu'_{2n+1} \times \mu'_{2m+1}\)  as induced subgraphs are non-word-representable. Our proofs exploit the hereditary nature of non-word-representability and the presence of non-comparability neighbourhoods.\\

\noindent
{\bf Keywords}:  word-representable graph, tensor product of graph, wheel graph, complete graph, Mycielski of cycle graph, extended Mycielski of cycle graph
\end{abstract}

\section{Introduction}

Two letters $x$ and $y$ alternate in a word $w$ if, after deleting
from $w$ all letters except the copies of $x$ and $y$, we obtain either a word
$xyxy\cdots$ or a word $yxyx\cdots$ (of even or odd length). A graph
$G=(V,E)$ is \emph{word-representable} if there exists a word $w$ over
the alphabet $V$ such that letters $x$ and $y$, $x \neq y$, alternate in
$w$ if and only if $xy \in E$; each letter in $V$ must appear in $w$. The
unique minimal (by the number of vertices) non-word-representable graph
on 6 vertices is the wheel graph $W_5$, presented in Figure~\ref{fig:wheels}
(which is the cycle graph $C_5$ with an all-adjacent vertex added), while there are 25
non-word-representable graphs on 7 vertices~\cite{kitaev2015}.

The introduction of word-representable graphs was influenced by
alternation word digraphs, which served as a tool to study the celebrated
Perkins semigroup, a central concept in semigroup theory since 1960,
particularly as a source of examples and counterexamples.

An orientation of a graph is \emph{semi-transitive} if it is acyclic
(there are no directed cycles), and for any directed path
$v_0\to v_1\to\cdots\to v_k$, either there is no edge between $v_0$ and
$v_k$, or $v_i\to v_j$ is an edge for all $0\le i<j\le k$. An undirected
graph is \emph{semi-transitive} if it admits a semi-transitive
orientation. A \emph{shortcut} in an acyclically oriented graph $G$ is
an induced non-transitive subgraph $G'$ on vertices
$\{v_0,v_1,\ldots,v_k\}$, $k\ge 3$, containing
$v_0\to v_1\to\cdots\to v_k$ and $v_0\to v_k$ (that is, $v_i\to v_j$
is not present in $G'$ for at least one pair of $i$ and $j$ with
$0\le i<j\le k$). The edge $v_0\to v_k$ is called the
\emph{shortcutting edge}.

A fundamental result in the area of word-representable graphs is the
following theorem.

\begin{theorem}[\cite{halldorsson2016}]
\label{thm:kitaev}
A graph $G$ is word-representable if and only if it admits a
semi-transitive orientation, that is, an acyclic and shortcut-free
orientation of its edges.
\end{theorem}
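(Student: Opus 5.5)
Both directions go through the classical notion of a \emph{$k$-uniform} word, in which every letter occurs exactly $k$ times. I would first show that any word-representable graph has a uniform representant. If $w$ represents $G$ and $x$ is a letter occurring the maximum number of times, then appending to the front of $w$ the initial permutation $p(w)$ of first occurrences, taken only for letters occurring fewer times, preserves all alternations. Iterating this step yields a $k$-uniform word.

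\textbf{From words to orientations.} Take a $k$-uniform word $w$ representing $G$. Orient each edge $xy$ as $x\to y$ when the first occurrence of $x$ precedes the first occurrence of $y$ in $w$. This orientation is acyclic, since it is induced by a linear order. For shortcut-freeness, let $v_0\to v_1\to\cdots\to v_k$ be a directed path with $v_0\to v_k$ an edge. Because consecutive letters alternate and the word is uniform, the $i$-th occurrences appear in the order $v_0,v_1,\dots,v_k$ within each ``round''. More precisely, the $j$-th copy of $v_{i}$ lies between the $j$-th copy of $v_{i-1}$ and the $j$-th copy of $v_{i+1}$. Since $v_0$ and $v_k$ alternate, the $(j+1)$-st copy of $v_0$ comes after the $j$-th copy of $v_k$. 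Sandwiching then forces every pair $v_i,v_j$ with $i<j$ to alternate as $v_iv_jv_iv_j\cdots$. Hence $v_iv_j$ is an edge oriented $v_i\to v_j$, so no shortcut exists.

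\textbf{From orientations to words.} This is the main obstacle. Given a semi-transitive orientation, I would fix a topological order and construct the word inductively by adding vertices one at a time, in the spirit of the proof that comparability graphs are permutationally representable. Concretely, let $v$ be a source and let $G'=G-v$, which by induction has a uniform representant $w'$ in which the induced orientation is the first-occurrence orientation. The out-neighbourhood $N^+(v)$ must alternate with $v$, and the non-neighbours must not.

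The key lemma I need is that semi-transitivity lets us transform $w'$ by duplicating or shifting letters, using the ``extension'' operations of Halldórsson--Kitaev--Pyatkin, until the letters of $N^+(v)$ occupy consecutive-in-each-round positions with no non-neighbour trapped inside. Once this holds, inserting one copy of $v$ before each round's block gives the desired representant. To make this work I would track, for each round, the interval spanned by the copies of $N^+(v)$. I would then use the absence of shortcuts to show that any non-neighbour $u$ inside that interval in every round would yield a path $v\to a\to\cdots\to u\to\cdots\to b$ with the edge $v\to b$ but the edge $v\to u$ missing, which is a shortcut.

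This final step, showing that non-neighbours can always be pushed out of the blocks, is the step I expect to be hardest. I would try first to prove it via the known characterization through ``shifting'' of $k$-representants and reduce it to the acyclic, shortcut-free condition on the induced subgraph on $N^+(v)\cup\{u\}$.
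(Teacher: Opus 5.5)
The paper does not prove this theorem; it is quoted from Halld\'orsson, Kitaev and Pyatkin, so your proposal has to stand on its own.

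\textbf{Words to orientations.} This direction is fine and is the standard argument. Prepending the initial permutation, restricted to the under-represented letters, preserves alternation: if only one letter of an alternating pair is prepended, that letter has fewer copies and is therefore the second letter of the alternation. It trivially preserves non-alternation. Your sandwiching on a uniform representant with the first-occurrence orientation does rule out shortcuts.

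\textbf{Orientations to words.} The converse is not proved, and the one concrete step you state is false. Even granting your blocks, inserting a copy of $v$ immediately before each block can create false edges. The danger comes from non-neighbours lying \emph{between} blocks, not only from those trapped inside them. For the semi-transitive path $v\to a\to u$, take $w'=auau$. The blocks are the two copies of $a$ and nothing is trapped, yet your recipe gives $vauvau$, in which $v$ and $u$ alternate. Placing the second $v$ right after the first block rescues this example. In general, though, every out-neighbour must have exactly one copy between consecutive copies of $v$ (and after the last), while every non-neighbour must violate this somewhere. Nothing in your plan controls how the rounds of $w'$ interleave.

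Moreover, the block lemma is only named; the ``extension operations'' are never specified. Your shortcut argument also confuses position in $w'$ with reachability in $G$. A non-neighbour sitting between copies of out-neighbours need not lie on any directed path between them, for instance if it is incomparable with all of $N^+(v)$. So no shortcut arises, and you would still have to show such letters can be moved without destroying other alternations. That is precisely the content of the theorem.

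\textbf{The missing idea.} Abandon the induction and break non-edges one vertex at a time. Fix a topological order $P$. For a vertex $x$, let $N^-$ and $N^+$ be its in- and out-neighbourhoods. Let $I$ (resp.\ $O$) be the non-neighbours of $x$ from which $x$ is reachable (resp.\ which are reachable from $x$), and let $A$ be the remaining vertices other than $x$. Writing $P_S$ for the subword of $P$ on $S$, set
\[
w_x=P_{I\cup N^-\cup A}\,P_I\,x\,P_{N^+}\,P_{N^-}\,x\,P_O\,P_{A\cup N^+\cup O}.
\]
Apart from $N^-\to x\to N^+$, the only edges between different classes are of types $I\to N^-$, $I\cup N^-\to A$, $N^-\to N^+$, $A\to N^+\cup O$ and $N^+\to O$. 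This follows from acyclicity, the definition of $A$, and semi-transitivity. Semi-transitivity is exactly what excludes $I\to N^+$, $N^-\to I$, $I\cup N^-\to O$ and $O\to N^+$: each would give a directed path with adjacent end-vertices containing both $x$ and a non-neighbour of $x$.

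A case check then shows that every edge $y\to z$ restricts to $yzyz$ in $w_x$. A non-neighbour $y$ of $x$ gives $yyxx$, $xxyy$ or $yxxy$ according as $y\in I$, $y\in O$ or $y\in A$. Hence $w_{x_1}w_{x_2}\cdots w_{x_n}$ represents $G$. Each edge appears as $(yz)^{2n}$, and each non-edge inherits a factor $yy$ or $zz$ from the piece of either of its endpoints.
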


The following corollary is used multiple times throughout the paper.

\begin{corollary}[\cite{halldorsson2016}]
\label{cor:3col}
Any $3$-colourable graph is word-representable.
\end{corollary}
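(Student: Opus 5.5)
Using Theorem~\ref{thm:kitaev}, it suffices to show that every $3$-colourable graph admits a semi-transitive orientation. Fix a proper colouring $c\colon V\to\{1,2,3\}$ and orient each edge $xy$ from the endpoint of smaller colour to the endpoint of larger colour, so $x\to y$ whenever $c(x)<c(y)$. Since the colouring is proper, every edge joins vertices of distinct colours, so every edge receives an orientation.

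\emph{Acyclicity.} Colours strictly increase along any directed path, so no directed cycle can exist.

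\emph{No shortcuts.} The colours are strictly increasing along any directed path $v_0\to v_1\to\cdots\to v_k$. With only three colours available, the path has at most three vertices, so $k\le 2$. A shortcut, however, requires a directed path with $k\ge 3$. Hence no shortcut exists.

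If one prefers to check the semi-transitivity condition directly rather than through the shortcut formulation, the only paths to consider are those with $k\le 2$. For $k=1$ there is nothing to check. For $k=2$, the path is $v_0\to v_1\to v_2$, and if the edge $v_0v_2$ is present then it is oriented $v_0\to v_2$ because $c(v_0)<c(v_2)$. So all required edges $v_i\to v_j$ are present.

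The orientation is therefore semi-transitive, and Theorem~\ref{thm:kitaev} gives word-representability. The only step needing any care is the bound $k\le 2$, and it follows immediately from the strict monotonicity of colours along a directed path.
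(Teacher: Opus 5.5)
Your proposal is correct and is the standard argument by which this corollary follows from Theorem~\ref{thm:kitaev} in the cited source: orient every edge from the smaller colour to the larger one, so directed paths have at most three vertices and neither a directed cycle nor a shortcut can occur. The paper itself gives no proof and simply cites the result.
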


\begin{definition}[Comparability graph]
A graph $G$ is called a \emph{comparability graph} if its edges can be oriented so that the resulting directed graph is transitive, that is, whenever $(u,v)$ and $(v,w)$ are directed edges, then $(u,w)$ is also a directed edge. A graph that is not a comparability graph is called a \emph{non-comparability graph}.
\end{definition}

\begin{definition}[Non-comparability neighbourhood]
Let $G=(V,E)$ be a graph and let $v \in V$. The \emph{neighbourhood} of $v$, denoted by $N(v)$, is the set of all vertices adjacent to $v$. We say that $v$ has a \emph{non-comparability neighbourhood} if the induced subgraph $G[N(v)]$ is a non-comparability graph.
\end{definition}

We will use the following theorem.

\begin{theorem}[\cite{kitaev2008}]\label{non-comp-neighb}
If a graph $G$ contains a vertex with a non-comparability neighbourhood, then $G$ is not word-representable.
\end{theorem}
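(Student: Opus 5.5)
We argue through Theorem~\ref{thm:kitaev}: we show that if some vertex $v$ has a non-comparability neighbourhood, then no orientation of $G$ is semi-transitive. Suppose, for contradiction, that $G$ has a semi-transitive orientation $O$. We restrict $O$ to the induced subgraph $G[N(v)]$ and show that this restriction is transitive. That would make $G[N(v)]$ a comparability graph, contradicting the hypothesis.

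First we note what we get for free. The restriction of an acyclic orientation is acyclic, so $O|_{N(v)}$ is acyclic. The only thing left to show is transitivity: whenever $x\to y$ and $y\to z$ with $x,y,z\in N(v)$, we must have $xz\in E$ oriented $x\to z$. Once $xz$ is known to be an edge, acyclicity rules out $z\to x$, since that would close the directed cycle $x\to y\to z\to x$. So the real task is to show that $x$ and $z$ are adjacent.

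The key step is a case split on where $v$ sits relative to the path $x\to y\to z$. Each of $x,y,z$ is joined to $v$, in one direction or the other.
\begin{itemize}
\item If $v\to x$ and $z\to v$, then $v\to x\to y\to z\to v$ is a directed cycle, which is impossible.
\item If $v\to x$ and $v\to z$, look at the directed path $v\to x\to y\to z$. The edge $v\to z$ joins its endpoints, so semi-transitivity forces all forward edges along the path, in particular $x\to z$.
\item If $x\to v$ and $z\to v$, use the path $x\to y\to z\to v$ together with the edge $x\to v$. Again semi-transitivity gives $x\to z$.
\item If $x\to v$ and $v\to z$, the orientation of $yv$ decides which path to use. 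If $y\to v$, take the path $x\to y\to v\to z$. If $v\to y$, take the path $x\to v\to y\to z$. In either case we need an edge between the endpoints $x$ and $z$ before semi-transitivity can be applied.
\end{itemize}

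The main obstacle is this last case. There semi-transitivity only applies once we already know $xz\in E$, and we do not. A case where $x$ and $z$ are non-adjacent is exactly a potential induced obstruction, so we cannot force transitivity directly. We will try to handle it differently. The idea is that $O|_{N(v)}$ splits into two parts, $N^-(v)$ (in-neighbours of $v$) and $N^+(v)$ (out-neighbours of $v$). By the second and third cases, each part is transitively oriented. Every edge between the two parts runs from $N^-(v)$ to $N^+(v)$: an edge $z\to x$ with $z\in N^+(v)$ and $x\in N^-(v)$ would give the directed cycle $v\to z\to x\to v$. We will then check that the union of two transitive orientations, with all crossing edges pointing from the first part to the second, is itself transitive. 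If the crossing edges cause trouble, the fallback is to reverse or reorder the orientation on one part so as to obtain a transitive orientation of $G[N(v)]$. Either way, this shows $G[N(v)]$ is a comparability graph, which is the contradiction we want.
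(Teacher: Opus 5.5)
\textbf{There is a genuine gap, in the last step.} Your first three cases are correct, and so are the structural facts you derive from them: $N^-(v)$ and $N^+(v)$ are each transitively oriented, and every crossing edge goes from $N^-(v)$ to $N^+(v)$.

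The check you promise, that this union is transitive, is false. On $\{v,x,y,z\}$ take all edges except $xz$, oriented $x\to y$, $y\to z$, $x\to v$, $y\to v$, $v\to z$. This is acyclic. Its only directed path of length $3$ is $x\to y\to v\to z$, whose endpoints are non-adjacent, so the orientation is semi-transitive. Yet on $N(v)=\{x,y,z\}$ it gives $x\to y\to z$ with $xz\notin E$. This is exactly your unresolved fourth case, so the split does not resolve it by itself.

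The fallback (``reverse or reorder one part'') is not an argument, and reversing a single part does not work in general. The example above needs $N^-(v)$ reversed. The semi-transitive configuration $x\to v$, $v\to y$, $v\to z$, $x\to y$, $y\to z$ (again $xz\notin E$) needs $N^+(v)$ reversed. Gluing the two configurations at $v$ defeats both choices.

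\textbf{What is missing} is the right reorientation, together with two further shortcut arguments through $v$. Put $A=N^-(v)$ and $B=N^+(v)$.

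(i) Suppose $a\to a'$ inside $A$ and $ab\in E$ with $b\in B$. Then $a\to b$, and the path $a\to a'\to v\to b$ with the edge $a\to b$ forces $a'\to b$, so $a'b\in E$.

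(ii) Suppose $b'\to b$ inside $B$ and $ab\in E$ with $a\in A$. Then the path $a\to v\to b'\to b$ with the edge $a\to b$ forces $ab'\in E$.

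Now keep the given orientation inside $A$ and inside $B$, but redirect every crossing edge from its $B$-end to its $A$-end. In this orientation no arc leaves $A$. So a directed $2$-path either lies inside one part, where it is transitive by your cases 2 and 3, or has the form $b'\to b\to a$ or $b\to a\to a'$. In these two cases, (ii), respectively (i), supplies the crossing edge $b'a$, respectively $ba'$. That edge is oriented from $B$ to $A$, as transitivity requires. Hence $G[N(v)]$ is a comparability graph, which gives the contradiction.

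For comparison, the paper only cites this theorem. The standard proof in the literature works with words instead. Cyclically shift a uniform word representing $G$ to the form $vw_1vw_2\cdots vw_k$. Each neighbour of $v$ occurs exactly once in each $w_i$, so the restriction to $N(v)$ is a concatenation of permutations. Permutationally representable graphs are precisely the comparability graphs.
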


\subsection{Graphs considered in this paper and their labeling}

\begin{definition}[Complete graph]
The \emph{complete graph} $K_n$ is the simple graph on $n$ vertices
in which every pair of distinct vertices is adjacent. 
\end{definition}

\begin{remark}
It is well known that $K_n$ is word-representable, as it can be represented by any permutation of its vertices.
\end{remark}

%
%
%

\begin{definition}[Wheel graph]

The \emph{wheel graph} $W_n$ is the graph obtained by taking a cycle
$C_n$ on $n$ vertices and adding a single \emph{central vertex}
connected to every vertex of $C_n$. The vertices of the cycle are
called the \emph{outer vertices}. 
\end{definition}

\begin{theorem}[\cite{kitaev2008}]
\label{thm:w5}
If a graph $G$ contains the wheel graph $W_{2k+1}$ for any $k \geq 2$
as an induced subgraph, then $G$ is non-word-representable.
\end{theorem}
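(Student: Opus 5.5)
We derive the statement from Theorem~\ref{non-comp-neighb}, combined with the fact that non-word-representability is hereditary. Write $W_{2k+1}$ with centre $c$ and outer cycle $v_1v_2\cdots v_{2k+1}v_1$. The neighbourhood of $c$ induces exactly the odd cycle $C_{2k+1}$ with $2k+1\ge 5$. So it suffices to show two things: first, that $C_{2k+1}$ is a non-comparability graph for $k\ge 2$; second, that word-representability is inherited by induced subgraphs.

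\textbf{Step 1: $C_{2k+1}$ is not a comparability graph for $k\ge 2$.} Suppose $C_{2k+1}$ has a transitive orientation. If some vertex $v_i$ had one incoming edge $v_{i-1}\to v_i$ and one outgoing edge $v_i\to v_{i+1}$, transitivity would force the edge $v_{i-1}v_{i+1}$. That edge is absent, since the cycle is induced and has length at least $5$ (for length $3$ it would exist). Hence every vertex is a source or a sink, so sources and sinks alternate around the cycle. This is impossible on an odd cycle. Therefore $C_{2k+1}$ is a non-comparability graph, and in $W_{2k+1}$ the centre $c$ has a non-comparability neighbourhood. By Theorem~\ref{non-comp-neighb}, $W_{2k+1}$ is not word-representable.

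\textbf{Step 2: heredity.} Let $G$ contain $W_{2k+1}$ as an induced subgraph on vertex set $S$. Suppose $w$ represents $G$. Delete from $w$ all letters not in $S$ to obtain $w'$. For $x,y\in S$, the subword of $w'$ on $\{x,y\}$ equals the subword of $w$ on $\{x,y\}$. So $x$ and $y$ alternate in $w'$ iff $xy\in E(G)$, which holds iff $xy\in E(G[S])$ because the subgraph is induced. Hence $w'$ represents $W_{2k+1}$, contradicting Step~1. Alternatively, one can apply Theorem~\ref{non-comp-neighb} directly to $G$, but only if $N_G(c)$ is non-comparability. That is not immediate, since $c$ may have more neighbours in $G$; the neighbourhood still contains $C_{2k+1}$ as an induced subgraph, and comparability is hereditary. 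Either route works; I would present the restriction argument, since it is cleanest.

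\textbf{Main obstacle.} The only real content is Step~1: checking that an odd cycle of length at least $5$ admits no transitive orientation. The key point is the source/sink alternation argument, which needs $2k+1\ge 5$ to rule out chords.
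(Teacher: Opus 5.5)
Your proof is correct. The paper gives no proof of its own and cites the result from \cite{kitaev2008}, where it is obtained the same way you obtain it: the hub's neighbourhood is an induced odd cycle of length at least $5$, which is not a comparability graph, so Theorem~\ref{non-comp-neighb} applies, and restricting a representing word to the vertices of an induced subgraph extends this to $G$. These are the same two tools, hereditary non-word-representability and non-comparability neighbourhoods, that the paper says its own arguments rely on.
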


\begin{remark}
Note that $W_3 \cong K_4$, and hence $W_3$ is word-representable. For $n \geq 4$, the chromatic number of $W_n$ is $3$ if $n$ is even, and $4$ if $n$ is odd. Consequently, by Theorem~\ref{thm:w5} and Corollary~\ref{cor:3col}, $W_n$ is word-representable if and only if $n$ is even or $n = 3$.
\end{remark}

\begin{figure}[!t]
\centering
\begin{tikzpicture}[
    V/.style={circle, draw, thick, fill=white,
              minimum size=18pt, inner sep=0pt, font=\small},
    scale=1.15
]

\node[V] (w3h) at (0,0)              {$4$};
\node[V] (w3a) at ($(90:1.2)$)       {$1$};
\node[V] (w3b) at ($(330:1.2)$)      {$2$};
\node[V] (w3c) at ($(210:1.2)$)      {$3$};
\draw[thick] (w3a)--(w3b)--(w3c)--(w3a);
\draw[thick] (w3h)--(w3a) (w3h)--(w3b) (w3h)--(w3c);
\node at (0,-1.85) {$W_3\cong K_4$};

\node[V] (w4h) at (3.6,0)                    {$5$};
\node[V] (w4a) at ($(90:1.2)  +(3.6,0)$)    {$1$};
\node[V] (w4b) at ($(0:1.2)   +(3.6,0)$)    {$2$};
\node[V] (w4c) at ($(270:1.2) +(3.6,0)$)    {$3$};
\node[V] (w4d) at ($(180:1.2) +(3.6,0)$)    {$4$};
\draw[thick] (w4a)--(w4b)--(w4c)--(w4d)--(w4a);
\draw[thick] (w4h)--(w4a) (w4h)--(w4b) (w4h)--(w4c) (w4h)--(w4d);
\node at (3.6,-1.85) {$W_4$};

\node[V] (w5h) at (7.5,0)                    {$6$};
\node[V] (w5a) at ($(90:1.2)  +(7.5,0)$)    {$1$};
\node[V] (w5b) at ($(18:1.2)  +(7.5,0)$)    {$2$};
\node[V] (w5c) at ($(306:1.2) +(7.5,0)$)    {$3$};
\node[V] (w5d) at ($(234:1.2) +(7.5,0)$)    {$4$};
\node[V] (w5e) at ($(162:1.2) +(7.5,0)$)    {$5$};
\draw[thick] (w5a)--(w5b)--(w5c)--(w5d)--(w5e)--(w5a);
\draw[thick] (w5h)--(w5a) (w5h)--(w5b) (w5h)--(w5c) (w5h)--(w5d) (w5h)--(w5e);
\node at (7.5,-1.85) {$W_5$};

\end{tikzpicture}
\caption{The wheel graphs $W_3$, $W_4$, and $W_5$.
Graphs $W_3$ and $W_4$ are word-representable, whereas $W_5$ is not.
The largest label always corresponds to the hub vertex.}
\label{fig:wheels}
\end{figure}

\begin{definition}[Mycielski of cycle graph]
The \emph{Mycielski of cycle graph} $\mu_n = \mu(C_n)$ is obtained by
applying one Mycielski construction step to the cycle $C_n$. Given
$C_n$ with vertices $v_1, v_2, \ldots, v_n$, the graph $\mu_n$ is
constructed as follows:
\begin{itemize}
    \item Keep all vertices and edges of $C_n$ as the
          \emph{top row}.
    \item Add a \emph{shadow vertex} $u_i$ for each vertex $v_i$,
          where $u_i$ is connected to every neighbour of $v_i$ in
          $C_n$. These form the \emph{middle row}.
    \item Add a single \emph{root vertex} $w$ connected to all
          shadow vertices $u_1, u_2, \ldots, u_n$.
\end{itemize}
Thus $\mu_n = \mu(C_n)$ has $2n + 1$ vertices arranged in three layers:
$n$ top vertices, $n$ shadow vertices, and $1$ root vertex.
\end{definition}


\begin{remark}\label{odd-M-non-wr} 
The graph $\mu_3$ is isomorphic to Graph $13$ in the list of $25$ non-word-representable graphs in \cite{KS}. Kitaev and Pyatkin~\cite{KP} proved that $\mu_5$ is not word-representable. However, a systematic study of the word-representability of Mycielski graphs of cycles was initiated by Hameed in~\cite{Hameed2024}, where it was shown that $\mu_7$, $\mu_9$, and $\mu_{11}$ are not word-representable, and it was also conjectured that the Mycielski graph of a graph $G$ is word-representable if and only if $G$ is bipartite. The conjecture was confirmed in~\cite{kitaev2025} by Kitaev and Pyatkin, leading, in particular, to the fact that $\mu_{2n+1}$ is not word-representable for $n \geq 1$.
\end{remark}

\begin{remark}
In this paper, we label $\mu_{n}$ according to the pattern shown in Figure~\ref{fig:mycielski}.
\end{remark}

\begin{definition}[Extended Mycielski of cycle graph]
The \emph{extended Mycielski of cycle graph}
$\mu'_n = \mu'(C_n)$ is obtained by applying one extended Mycielski
construction step to the cycle $C_n$.
Given $C_n$ with vertices $v_1, v_2, \ldots, v_n$, the graph $\mu'_n$
is constructed as follows:
\begin{itemize}
  \item Keep all vertices and edges of $C_n$ as the top row.
  \item Add a shadow vertex $u_i$ for each vertex $v_i$, where $u_i$ is
        connected to every vertex of $C_n$ except $v_i$ itself.
        These form the middle row.
  \item Add a single root vertex $w$ connected to all shadow vertices
        $u_1, u_2, \ldots, u_n$.
\end{itemize}
Thus $\mu'_n = \mu'(C_n)$ has $2n+1$ vertices arranged in three layers:
$n$ top vertices, $n$ shadow vertices, and $1$ root vertex. The key
difference from $\mu_n$ is that each shadow vertex $u_i$ is adjacent to
all top-row vertices except its own corresponding vertex $v_i$, rather
than only to the neighbours of $v_i$ in $C_n$. See Figure~\ref{fig:mycielski-ext} for examples.
\end{definition}

\begin{remark}
Hameed~[2] proved a complete characterisation of semi-transitive extended
Mycielski graphs: $\mu'(G)$ is semi-transitive if and only if $G$ is a
bipartite graph. Since $C_{2n}$ is bipartite and $C_{2n+1}$ is not, it
follows immediately that $\mu'_{2n}$ is word-representable for all
$n \geq 2$, whereas $\mu'_{2n+1}$ is not word-representable for all
$n \geq 1$. This is precisely the same characterisation as for $\mu_n$
(see Remark~1.3). Consequently, all results in this paper concerning
$\mu_{2n}$ and $\mu_{2n+1}$ hold verbatim for $\mu'_{2n}$ and
$\mu'_{2n+1}$ respectively, with identical proofs. We will therefore skip
the proofs for $\mu'$ and simply state the results in the summary table.
\end{remark}

\begin{figure}[!t]
\centering
\begin{tikzpicture}[
    V/.style={circle, draw, thick, fill=white,
              minimum size=20pt, inner sep=0pt, font=\scriptsize},
    scale=1.0
]

\node[V] (a1) at (0.0,  0)    {$1$};
\node[V] (a2) at (1.0,  0)    {$2$};
\node[V] (a3) at (2.0,  0)    {$3$};
\node[V] (a4) at (0.0, -2.0)  {$4$};
\node[V] (a5) at (1.0, -2.0)  {$5$};
\node[V] (a6) at (2.0, -2.0)  {$6$};
\node[V] (a7) at (1.0, -3.2)  {$7$};
\draw[thick] (a1)--(a2)--(a3);
\draw[thick, bend left=50] (a1) to (a3);
\draw[thick] (a4)--(a2)  (a4)--(a3);
\draw[thick] (a5)--(a1)  (a5)--(a3);
\draw[thick] (a6)--(a1)  (a6)--(a2);
\draw[thick] (a7)--(a4) (a7)--(a5) (a7)--(a6);
\node at (1.0,-4) {$\mu_3$};

\node[V] (b1) at (3.5,  0)    {$1$};
\node[V] (b2) at (4.5,  0)    {$2$};
\node[V] (b3) at (5.5,  0)    {$3$};
\node[V] (b4) at (6.5,  0)    {$4$};
\node[V] (b5) at (3.5, -2.0)  {$5$};
\node[V] (b6) at (4.5, -2.0)  {$6$};
\node[V] (b7) at (5.5, -2.0)  {$7$};
\node[V] (b8) at (6.5, -2.0)  {$8$};
\node[V] (b9) at (5.0, -3.2)  {$9$};
\draw[thick] (b1)--(b2)--(b3)--(b4);
\draw[thick, bend left=40] (b1) to (b4);
\draw[thick] (b5)--(b2)  (b5)--(b4);
\draw[thick] (b6)--(b1)  (b6)--(b3);
\draw[thick] (b7)--(b2)  (b7)--(b4);
\draw[thick] (b8)--(b3)  (b8)--(b1);
\draw[thick] (b9)--(b5) (b9)--(b6) (b9)--(b7) (b9)--(b8);
\node at (5.0,-4) {$\mu_4$};

\node[V] (c1) at (7.9,  0)    {$1$};
\node[V] (c2) at (8.9,  0)    {$2$};
\node[V] (c3) at (9.9,  0)    {$3$};
\node[V] (c4) at (10.9, 0)    {$4$};
\node[V] (c5) at (11.9, 0)    {$5$};
\node[V] (c6)  at (7.9,  -2.0) {$6$};
\node[V] (c7)  at (8.9,  -2.0) {$7$};
\node[V] (c8)  at (9.9,  -2.0) {$8$};
\node[V] (c9)  at (10.9, -2.0) {$9$};
\node[V] (c10) at (11.9, -2.0) {$10$};
\node[V] (c11) at (9.9, -3.2)  {$11$};
\draw[thick] (c1)--(c2)--(c3)--(c4)--(c5);
\draw[thick, bend left=35] (c1) to (c5);
\draw[thick] (c6)--(c2)  (c6)--(c5);
\draw[thick] (c7)--(c1)  (c7)--(c3);
\draw[thick] (c8)--(c2)  (c8)--(c4);
\draw[thick] (c9)--(c3)  (c9)--(c5);
\draw[thick] (c10)--(c4) (c10)--(c1);
\draw[thick] (c11)--(c6) (c11)--(c7) (c11)--(c8) (c11)--(c9) (c11)--(c10);
\node at (9.9,-4) {$\mu_5$};

\end{tikzpicture}
\caption{The non-word-representable Mycielski of cycle graphs $\mu_3$, $\mu_4$, and $\mu_5$.
Each graph consists of a top row (the original cycle $C_n$), a middle row (shadow vertices), and a single root vertex at the bottom, which receives the largest label.
We follow a consistent labeling pattern for all $\mu_n$.}
\label{fig:mycielski}
\end{figure}

\begin{figure}[!t]
\centering
\begin{tikzpicture}[
    V/.style={circle, draw, thick, fill=white,
              minimum size=20pt, inner sep=0pt, font=\scriptsize},
    scale=1.0
]

\node[V] (a1) at (0.0,  0)    {$1$};
\node[V] (a2) at (1.0,  0)    {$2$};
\node[V] (a3) at (2.0,  0)    {$3$};
\node[V] (a4) at (0.0, -2.0)  {$4$};
\node[V] (a5) at (1.0, -2.0)  {$5$};
\node[V] (a6) at (2.0, -2.0)  {$6$};
\node[V] (a7) at (1.0, -3.2)  {$7$};
\draw[thick] (a1)--(a2)--(a3);
\draw[thick, bend left=50] (a1) to (a3);
\draw[thick] (a4)--(a2)  (a4)--(a3);
\draw[thick] (a5)--(a1)  (a5)--(a3);
\draw[thick] (a6)--(a1)  (a6)--(a2);
\draw[thick] (a7)--(a4) (a7)--(a5) (a7)--(a6);
\node at (1.0,-4) {$\mu'_3=\mu_3$};

\node[V] (b1) at (3.5,  0)    {$1$};
\node[V] (b2) at (4.5,  0)    {$2$};
\node[V] (b3) at (5.5,  0)    {$3$};
\node[V] (b4) at (6.5,  0)    {$4$};
\node[V] (b5) at (3.5, -2.0)  {$5$};
\node[V] (b6) at (4.5, -2.0)  {$6$};
\node[V] (b7) at (5.5, -2.0)  {$7$};
\node[V] (b8) at (6.5, -2.0)  {$8$};
\node[V] (b9) at (5.0, -3.2)  {$9$};
\draw[thick] (b1)--(b2)--(b3)--(b4);
\draw[thick, bend left=40] (b1) to (b4);
\draw[thick] (b5)--(b2) (b5)--(b3) (b5)--(b4);
\draw[thick] (b6)--(b1)  (b6)--(b4) (b6)--(b3);
\draw[thick] (b7)--(b2)  (b7)--(b1) (b7)--(b4);
\draw[thick] (b8)--(b3)  (b8)--(b2) (b8)--(b1);
\draw[thick] (b9)--(b5) (b9)--(b6) (b9)--(b7) (b9)--(b8);
\node at (5.0,-4) {$\mu'_4$};

\node[V] (c1) at (7.9,  0)    {$1$};
\node[V] (c2) at (8.9,  0)    {$2$};
\node[V] (c3) at (9.9,  0)    {$3$};
\node[V] (c4) at (10.9, 0)    {$4$};
\node[V] (c5) at (11.9, 0)    {$5$};
\node[V] (c6)  at (7.9,  -2.0) {$6$};
\node[V] (c7)  at (8.9,  -2.0) {$7$};
\node[V] (c8)  at (9.9,  -2.0) {$8$};
\node[V] (c9)  at (10.9, -2.0) {$9$};
\node[V] (c10) at (11.9, -2.0) {$10$};
\node[V] (c11) at (9.9, -3.2)  {$11$};
\draw[thick] (c1)--(c2)--(c3)--(c4)--(c5);
\draw[thick, bend left=35] (c1) to (c5);
\draw[thick] (c6)--(c2) (c6)--(c3) (c6)--(c4) (c6)--(c5);
\draw[thick] (c7)--(c1)  (c7)--(c4) (c7)--(c5) (c7)--(c3);
\draw[thick] (c8)--(c1) (c8)--(c2) (c8)--(c5)  (c8)--(c4);
\draw[thick] (c9)--(c1) (c9)--(c2) (c9)--(c3)  (c9)--(c5);
\draw[thick] (c10)--(c2) (c10)--(c3) (c10)--(c4) (c10)--(c1);
\draw[thick] (c11)--(c6) (c11)--(c7) (c11)--(c8) (c11)--(c9) (c11)--(c10);
\node at (9.9,-4) {$\mu'_5$};

\end{tikzpicture}
\caption{The non-word-representable extended Mycielski of cycle graphs $\mu'_3$, $\mu'_4$, and $\mu'_5$.
Each graph consists of a top row (the original cycle $C_n$), a middle row (shadow vertices), and a single root vertex at the bottom, which receives the largest label.}
\label{fig:mycielski-ext}
\end{figure}

\subsection{Results of the paper} 

Despite the extensive attention given to word-representable graphs in the literature, to the best of our knowledge, no results are known on the word-representability of tensor graph products. In \cite{kitaev2015}, Kitaev and Lozin posed the following problem.

\begin{problem}[Problem 7.2.5 in \cite{kitaev2015}] Can anything definite be said about taking the tensor product of two word-representable graphs? Or the tensor product of a word-representable and a non-word-representable graph? Or the tensor product of two non-word-representable graphs?
\end{problem}

It follows from the results in this paper that the tensor product of two word-representable graphs, as well as that of a word-representable graph and a non-word-representable graph, may or may not be word-representable. We also provide examples showing that the tensor product of two non-word-representable graphs can be non-word-representable. However, it remains an open question whether the tensor product of two non-word-representable graphs can ever be word-representable.

Motivated by these observations, our goal is to initiate a systematic study of the word-representability of tensor products of graphs containing $W_n$, $K_n$, $\mu_n$, or $\mu'_n$ as induced subgraphs. In Section~\ref{tensor-sec}, we define the notion of the tensor product of graphs and present the properties relevant to our work. In Section~\ref{wr-tensor-prod-wheel}, we present results on tensor products involving wheel graphs $W_n$. The remaining cases are discussed in Section~\ref{remaining-sec}. Concluding remarks are provided in Section~\ref{conclusion-sec}. Finally, our key results are summarized in Table~\ref{tab-results}.

\begin{table}[!t]
\centering
\begin{tabular}{|c|c|c|c|c|c|c|c|c|}
\hline
\makecell{$G \setminus H$}
  & \makecell{$W_{2m}$ \\ \scriptsize$(m\geq 2)$}
  & \makecell{$W_{2m+1}$ \\ \scriptsize$(m\geq 2)$}
  & \makecell{$K_{m}$ \\ \scriptsize$(m\leq 3)$}
  & \makecell{$K_{m}$ \\ \scriptsize$(m\geq 4)$}
  & \makecell{$\mu_{2m}$ \\ \scriptsize$(m\geq 2)$}
  & \makecell{$\mu_{2m+1}$ \\ \scriptsize$(m\geq 1)$}
  & \makecell{$\mu'_{2m}$ \\ \scriptsize$(m\geq 2)$}
  & \makecell{$\mu'_{2m+1}$ \\ \scriptsize$(m\geq 1)$} \\
\hline
\makecell{$W_{2n}$ \\ \scriptsize$(n\geq 2)$}
  & \makecell{\textbf{WR} \\ \scriptsize(Thm~\ref{thm:w2n_wr})}
  & \makecell{\textbf{WR} \\ \scriptsize(Thm~\ref{thm:w2n_wr})}
  & \makecell{\textbf{WR} \\ \scriptsize(Thm~\ref{thm:w2n_wr})}
  & \makecell{\textbf{WR} \\ \scriptsize(Thm~\ref{thm:w2n_wr})}
  & \makecell{\textbf{WR} \\ \scriptsize(Thm~\ref{thm:w2n_wr})}
  & \makecell{\textbf{WR} \\ \scriptsize(Thm~\ref{thm:w2n_wr})}
  & \makecell{\textbf{WR} \\ \scriptsize(Thm~\ref{thm:w2n_wr})}
  & \makecell{\textbf{WR} \\ \scriptsize(Thm~\ref{thm:w2n_wr})} \\
\hline
\makecell{$W_{2n+1}$ \\ \scriptsize$(n\geq 2)$}
  & \makecell{\textbf{WR} \\ \scriptsize(Thm~\ref{thm:w2n_wr})}
  & \makecell{\textbf{NWR} \\ \scriptsize(Thm~\ref{thm:wheel})}
  & \makecell{\textbf{WR} \\ \scriptsize(Cor.~\ref{cor:3col})}
  & \makecell{\textbf{NWR} \\ \scriptsize(Thm~\ref{thm:w3-1})}
  & \makecell{\textbf{WR} \\ \scriptsize(Thm~\ref{thm:m2n_wr})}
  & \makecell{\textbf{Open} \\ \scriptsize(Prob.~\ref{prob1})}
  & \makecell{\textbf{WR} \\ \scriptsize(Thm~\ref{thm:m2n_wr})}
  & \makecell{\textbf{Open} \\ \scriptsize(Prob.~\ref{prob1})} \\
\hline
\makecell{$K_{n}$ \\ \scriptsize$(n\leq 3)$}
  & \makecell{\textbf{WR} \\ \scriptsize(Thm~\ref{thm:w2n_wr})}
  & \makecell{\textbf{WR} \\ \scriptsize(Cor.~\ref{cor:3col})}
  & \makecell{\textbf{WR} \\ \scriptsize(Cor.~\ref{cor:3col})}
  & \makecell{\textbf{WR} \\ \scriptsize(Cor.~\ref{cor:3col})}
  & \makecell{\textbf{WR} \\ \scriptsize(Cor.~\ref{cor:3col})}
  & \makecell{\textbf{WR} \\ \scriptsize(Cor.~\ref{cor:3col})}
  & \makecell{\textbf{WR} \\ \scriptsize(Cor.~\ref{cor:3col})}
  & \makecell{\textbf{WR} \\ \scriptsize(Cor.~\ref{cor:3col})} \\
\hline
\makecell{$K_{n}$ \\ \scriptsize$(n\geq 4)$}
  & \makecell{\textbf{WR} \\ \scriptsize(Thm~\ref{thm:w2n_wr})}
  & \makecell{\textbf{NWR} \\ \scriptsize(Thm~\ref{thm:w3-1})}
  & \makecell{\textbf{WR} \\ \scriptsize(Cor.~\ref{cor:3col})}
  & \makecell{\textbf{NWR} \\ \scriptsize(Thm~\ref{thm:kn_km})}
  & \makecell{\textbf{WR} \\ \scriptsize(Thm~\ref{thm:m2n_wr})}
  & \makecell{\textbf{NWR} \\ \scriptsize(Thm~\ref{thm:mu2n1_k4})}
  & \makecell{\textbf{WR} \\ \scriptsize(Thm~\ref{thm:m2n_wr})}
  & \makecell{\textbf{NWR} \\ \scriptsize(Thm~\ref{thm:mu2n1_k4})} \\
\hline
\makecell{$\mu_{2n}$ \\ \scriptsize$(n\geq 2)$}
  & \makecell{\textbf{WR} \\ \scriptsize(Thm~\ref{thm:w2n_wr})}
  & \makecell{\textbf{WR} \\ \scriptsize(Thm~\ref{thm:m2n_wr})}
  & \makecell{\textbf{WR} \\ \scriptsize(Thm~\ref{thm:m2n_wr})}
  & \makecell{\textbf{WR} \\ \scriptsize(Thm~\ref{thm:m2n_wr})}
  & \makecell{\textbf{WR} \\ \scriptsize(Thm~\ref{thm:m2n_wr})}
  & \makecell{\textbf{WR} \\ \scriptsize(Thm~\ref{thm:m2n_wr})}
  & \makecell{\textbf{WR} \\ \scriptsize(Thm~\ref{thm:m2n_wr})}
  & \makecell{\textbf{WR} \\ \scriptsize(Thm~\ref{thm:m2n_wr})} \\
\hline
\makecell{$\mu_{2n+1}$ \\ \scriptsize$(n\geq 1)$}
  & \makecell{\textbf{WR} \\ \scriptsize(Thm~\ref{thm:w2n_wr})}
  & \makecell{\textbf{Open} \\ \scriptsize(Prob.~\ref{prob1})}
  & \makecell{\textbf{WR} \\ \scriptsize(Cor.~\ref{cor:3col})}
  & \makecell{\textbf{NWR} \\ \scriptsize(Thm~\ref{thm:mu2n1_k4})}
  & \makecell{\textbf{WR} \\ \scriptsize(Thm~\ref{thm:m2n_wr})}
  & \makecell{\textbf{NWR} \\ \scriptsize(Thm~\ref{thm:mu2n1_mu2m1})}
  & \makecell{\textbf{WR} \\ \scriptsize(Thm~\ref{thm:m2n_wr})}
  & \makecell{\textbf{NWR} \\ \scriptsize(if $n=m$)}  \\
\hline
\makecell{$\mu'_{2n}$ \\ \scriptsize$(n\geq 2)$}
  & \makecell{\textbf{WR} \\ \scriptsize(Thm~\ref{thm:w2n_wr})}
  & \makecell{\textbf{WR} \\ \scriptsize(Thm~\ref{thm:m2n_wr})}
  & \makecell{\textbf{WR} \\ \scriptsize(Thm~\ref{thm:m2n_wr})}
  & \makecell{\textbf{WR} \\ \scriptsize(Thm~\ref{thm:m2n_wr})}
  & \makecell{\textbf{WR} \\ \scriptsize(Thm~\ref{thm:m2n_wr})}
  & \makecell{\textbf{WR} \\ \scriptsize(Thm~\ref{thm:m2n_wr})}
  & \makecell{\textbf{WR} \\ \scriptsize(Thm~\ref{thm:m2n_wr})}
  & \makecell{\textbf{WR} \\ \scriptsize(Thm~\ref{thm:m2n_wr})} \\
\hline
\makecell{$\mu'_{2n+1}$ \\ \scriptsize$(n\geq 1)$}
  & \makecell{\textbf{WR} \\ \scriptsize(Thm~\ref{thm:w2n_wr})}
  & \makecell{\textbf{Open} \\ \scriptsize(Prob.~\ref{prob1})}
  & \makecell{\textbf{WR} \\ \scriptsize(Cor.~\ref{cor:3col})}
  & \makecell{\textbf{NWR} \\ \scriptsize(Thm~\ref{thm:mu2n1_k4})}
  & \makecell{\textbf{WR} \\ \scriptsize(Thm~\ref{thm:m2n_wr})}
  & \makecell{\textbf{NWR} \\ \scriptsize(Thm~\ref{thm:mu2n1_mu2m1})}
  & \makecell{\textbf{WR} \\ \scriptsize(Thm~\ref{thm:m2n_wr})}
  & \makecell{\textbf{NWR} \\ \scriptsize(if $n=m$)} \\
\hline
\end{tabular}
\caption{We assume $n\leq m$. Theorem~\ref{thm:mu2n1_mu2m1} gives the cases of $\mu_{2n+1}\times\mu'_{2n+1}$ and $\mu'_{2n+1}\times\mu'_{2n+1}$.}\label{tab-results}
\end{table}

\section{Tensor product of graphs}\label{tensor-sec}

The tensor product is one of the four standard graph products, arising
naturally in the study of graph homomorphisms and graph colourings. We
recall its formal definition below.

\begin{definition}[Tensor product of graphs]
Let $G$ and $H$ be two simple undirected graphs with vertex sets
$V(G)$ and $V(H)$, and edge sets $E(G)$ and $E(H)$, respectively.
The \emph{tensor product} $G \times H$ is the graph defined as follows:
\begin{itemize}
    \item \textbf{Vertex set:}
    \[
        V(G \times H) \;=\; V(G) \times V(H)
        \;=\; \bigl\{(u,\, v) \mid u \in V(G),\; v \in V(H)\bigr\}.
    \]
    \item \textbf{Edge set:} Two vertices $(u_1, v_1)$ and $(u_2, v_2)$
    in $G \times H$ are adjacent \emph{if and only if}
    \[
        u_1 u_2 \in E(G) \quad \text{and} \quad v_1 v_2 \in E(H).
    \]
\end{itemize}
\end{definition}

\begin{remark}
The number of vertices of $G \times H$ is $|V(G)| \cdot |V(H)|$, and
the degree of any vertex $(u,v)$ in $G \times H$ equals
$\deg_G(u) \cdot \deg_H(v)$.
\end{remark}

\begin{remark}
In general, the full tensor product $G \times H$ is a much larger and
more complex graph than either $G$ or $H$ alone. For example, the full
graph $W_5 \times W_5$ has $6 \times 6 = 36$ vertices, and the degree
of each vertex $(u,v)$ depends on the types of both $u$ and $v$: a
rim--rim pair has degree $2 \times 2 = 4$, a rim--hub pair has degree
$2 \times 4 = 8$, and the hub--hub vertex has degree $4 \times 4 = 16$.
In this paper we do not study the full product directly. Instead, we
identify within it an induced subgraph that is already known to be
non-word-representable, which immediately implies that the full product
is non-word-representable.
\end{remark}

\begin{proposition}[\cite{hammack2011}]
Let $A_G$ and $A_H$ be the adjacency matrices of $G$ and $H$
respectively. Then $A_{G \times H} = A_G \times A_H$, where $\times$
denotes the Tensor product of matrices.
\end{proposition}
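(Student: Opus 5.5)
The plan is to check the identity entrywise, after fixing orderings of the vertex sets. Write $V(G)=\{u_1,\dots,u_p\}$ and $V(H)=\{v_1,\dots,v_q\}$. Order $V(G\times H)$ lexicographically, so that $(u_i,v_k)$ gets index $(i-1)q+k$. With this ordering, the Kronecker product $A_G\times A_H$ is the $pq\times pq$ block matrix whose $(i,j)$ block is $(A_G)_{ij}A_H$. Its entry in row $(i-1)q+k$ and column $(j-1)q+l$ is therefore $(A_G)_{ij}(A_H)_{kl}$.

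The key step is the entrywise comparison. The entry of $A_{G\times H}$ in the same position is $1$ exactly when $(u_i,v_k)$ and $(u_j,v_l)$ are adjacent in $G\times H$. By the definition of the tensor product, this happens if and only if $u_iu_j\in E(G)$ and $v_kv_l\in E(H)$. That is, it happens if and only if $(A_G)_{ij}=1$ and $(A_H)_{kl}=1$.

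Since both factors take values in $\{0,1\}$, the product $(A_G)_{ij}(A_H)_{kl}$ equals $1$ precisely under this condition and equals $0$ otherwise. Hence the two matrices agree in every entry.

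One small point needs checking: the diagonal. Simple graphs have zero diagonals, so $(A_G)_{ii}=0$, and every entry with $i=j$ vanishes on both sides. Likewise, every entry with $k=l$ vanishes on both sides. This is consistent with $G\times H$ having no loops.

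There is no real obstacle here. The only care needed is to state the vertex ordering explicitly, since the identity holds only up to the simultaneous permutation of rows and columns induced by the chosen ordering of $V(G)\times V(H)$.
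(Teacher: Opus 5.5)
Your entrywise verification is correct: the $(i,j)$ block of the Kronecker product is $(A_G)_{ij}A_H$, and your remark that the identity holds for the lexicographic ordering of $V(G)\times V(H)$ is the right caveat. The paper gives no proof of its own and simply cites Hammack et al.; this entrywise comparison is the standard argument for that fact, so nothing is missing.
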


\begin{example}
Let $G = H = C_3$ with vertices $\{1,2,3\}$ and edges $\{12,23,13\}$.
The adjacency matrix of $C_3$ is
\[
A_{C_3} =
\begin{pmatrix}0&1&1\\1&0&1\\1&1&0\end{pmatrix}.
\]
By Proposition~2.1, $A_{C_3 \times C_3} = A_{C_3} \times A_{C_3}$.
The product $C_3 \times C_3$ has $3 \cdot 3 = 9$ vertices
$(i,j)$ for $i,j \in \{1,2,3\}$, and every vertex $(i,j)$ has degree
$\deg(i)\cdot\deg(j) = 2\cdot 2 = 4$, giving $18$ edges in total.
\end{example}

In this paper we need the following well-known results.

\begin{theorem}[\cite{hammack2011}]
\label{thm:commutativity}
For any graphs $G$ and $H$, the tensor product is commutative, that is,
\[
G \times H \cong H \times G.
\]
\end{theorem}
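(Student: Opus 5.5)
The natural approach is to write down an explicit isomorphism and check that it preserves adjacency in both directions. Define
\[
\varphi : V(G \times H) \to V(H \times G), \qquad \varphi\bigl((u,v)\bigr) = (v,u).
\]
It is a bijection between $V(G)\times V(H)$ and $V(H)\times V(G)$, since it is its own inverse up to swapping the roles of the factors: the map $(v,u)\mapsto(u,v)$ undoes it.

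Next we check edges. Take two vertices $(u_1,v_1)$ and $(u_2,v_2)$ of $G\times H$. By the definition of the tensor product, they are adjacent if and only if $u_1u_2\in E(G)$ and $v_1v_2\in E(H)$. Since logical conjunction is symmetric, this holds if and only if $v_1v_2\in E(H)$ and $u_1u_2\in E(G)$. That is precisely the condition for $(v_1,u_1)$ and $(v_2,u_2)$ to be adjacent in $H\times G$. So $\varphi$ preserves both adjacency and non-adjacency, and it is a graph isomorphism.

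There is no real obstacle here. The only point needing care is to verify the equivalence in both directions, so that non-edges also map to non-edges. This is automatic because the argument is a chain of ``if and only if'' statements.

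As an optional alternative, one could use the Proposition on adjacency matrices. The Kronecker products $A_G\times A_H$ and $A_H\times A_G$ are permutation-similar via the perfect shuffle permutation matrix. This gives the same conclusion, but the direct vertex map is simpler, so I would present that one.
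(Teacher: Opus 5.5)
Your proof is correct: the coordinate swap $(u,v)\mapsto(v,u)$ is a bijection, and because adjacency in the tensor product is the conjunction of two conditions, one per coordinate, it preserves both edges and non-edges. The paper gives no proof of its own and simply cites Hammack, Imrich and Klav\v{z}ar, where commutativity is established by the same coordinate-swap isomorphism, so your argument is the standard one.
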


\begin{theorem}[\cite{hammack2011}]
\label{thm:chromatic}
For any graphs $G$ and $H$, the chromatic number of their tensor product satisfies
\[
\chi(G \times H) \leq \min\{\chi(G), \chi(H)\}.
\]
\end{theorem}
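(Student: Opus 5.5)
The plan is to exhibit an explicit proper colouring of $G \times H$ that uses only $\chi(G)$ colours, and then invoke Theorem~\ref{thm:commutativity} (or simply repeat the argument with the roles of $G$ and $H$ swapped) to obtain the bound $\chi(H)$ as well. Taking the smaller of the two gives the claimed inequality. No word-representability machinery is needed; the argument uses only the definition of the tensor product and the definition of a proper colouring.

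Concretely, I will fix a proper colouring $c \colon V(G) \to \{1,\dots,\chi(G)\}$ of $G$. I then define $\tilde c \colon V(G \times H) \to \{1,\dots,\chi(G)\}$ by $\tilde c(u,v) = c(u)$, so that each vertex is coloured through the projection onto its first coordinate. The key step is checking that $\tilde c$ is proper. Suppose $(u_1,v_1)(u_2,v_2) \in E(G \times H)$. By the definition of the tensor product, $u_1u_2 \in E(G)$, and in particular $u_1 \neq u_2$ since $G$ is simple. Because $c$ is proper, $c(u_1) \neq c(u_2)$, which means $\tilde c(u_1,v_1) \neq \tilde c(u_2,v_2)$. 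Hence $\chi(G \times H) \le \chi(G)$.

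Another way to phrase this is that the projection $(u,v) \mapsto u$ is a graph homomorphism $G \times H \to G$, and any proper colouring of the target pulls back along a homomorphism. The symmetric projection $(u,v) \mapsto v$ is a homomorphism to $H$, which gives $\chi(G \times H) \le \chi(H)$. Combining the two bounds yields $\chi(G \times H) \le \min\{\chi(G),\chi(H)\}$.

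There is no real obstacle here. The only point to state carefully is that adjacency in $G \times H$ requires adjacency in \emph{both} coordinates, and this is exactly what makes each projection edge-preserving. (In the Cartesian product, by contrast, an edge may keep one coordinate fixed, and the argument fails.) I will also note the degenerate case in which one factor has no edges: then $G \times H$ is edgeless, and the bound holds trivially.
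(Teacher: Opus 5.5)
Your proof is correct: the projections $G \times H \to G$ and $G \times H \to H$ are homomorphisms, so any proper colouring of either factor pulls back to a proper colouring of $G \times H$. The paper gives no proof of its own and simply cites \cite{hammack2011}; your projection argument is the standard proof of this fact.
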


In particular, Theorem~\ref{thm:chromatic} implies that if at least one of the graphs $G$ and $H$ is $3$-colourable, then $G \times H$ is $3$-colourable.

\section{Word-representability of tensor products with wheel graphs}\label{wr-tensor-prod-wheel}

We begin with a general theorem involving even wheels $W_{2n}$.

\begin{theorem}
\label{thm:w2n_wr}
Let $G$ be any graph and
$n \geq 2$. Then $W_{2n} \times G\cong G \times W_{2n}$ is word-representable.
\end{theorem}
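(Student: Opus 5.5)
The plan is to reduce the statement to a colouring argument, using Corollary~\ref{cor:3col} together with Theorem~\ref{thm:chromatic}. By Theorem~\ref{thm:commutativity} it suffices to treat $W_{2n}\times G$. The point is that $W_{2n}$ is $3$-colourable for every $n\ge 2$. Then $\chi(W_{2n}\times G)\le \min\{\chi(W_{2n}),\chi(G)\}\le 3$, so $W_{2n}\times G$ is $3$-colourable and hence word-representable by Corollary~\ref{cor:3col}, whatever $G$ is.

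The steps, in order, are as follows.

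First, exhibit an explicit proper $3$-colouring of $W_{2n}$. Label the outer cycle $1,2,\dots,2n$ and the hub $2n+1$, as in Figure~\ref{fig:wheels}. Colour the odd-indexed outer vertices with colour $1$, the even-indexed outer vertices with colour $2$, and the hub with colour $3$. Since the outer cycle $C_{2n}$ has even length, consecutive outer vertices (including the pair $2n,1$) receive different colours. The hub's colour differs from every outer colour, so the colouring is proper.

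Second, invoke Theorem~\ref{thm:chromatic} to conclude $\chi(W_{2n}\times G)\le 3$. To keep the argument self-contained, I would also note the direct reason. If $c$ is a proper colouring of $W_{2n}$, then $(u,v)\mapsto c(u)$ is a proper colouring of $W_{2n}\times G$, because adjacency of $(u_1,v_1)$ and $(u_2,v_2)$ forces $u_1u_2\in E(W_{2n})$ and hence $c(u_1)\ne c(u_2)$. This projection argument also covers degenerate $G$, such as an edgeless graph, where the product is edgeless and trivially word-representable.

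Third, apply Corollary~\ref{cor:3col} to finish.

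There is no serious obstacle. The only point needing care is the parity condition: the outer cycle must be even for the two-colouring of the rim to close up, which is exactly why the result is restricted to $W_{2n}$ with $n\ge 2$.
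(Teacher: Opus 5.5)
Your proposal is correct and follows essentially the same route as the paper: $W_{2n}$ is $3$-colourable, so Theorem~\ref{thm:chromatic} makes $W_{2n}\times G$ $3$-colourable, and Corollary~\ref{cor:3col} gives word-representability. Your explicit colouring of $W_{2n}$ and the projection colouring $(u,v)\mapsto c(u)$ spell out details the paper leaves implicit.
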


\begin{proof}
Since $W_{2n}$ is $3$-colourable, by Theorem~\ref{thm:chromatic} the graph $W_{2n} \times G$ is $3$-colourable, and by Corollary~\ref{cor:3col} it is word-representable.
\end{proof}

In the remaining theorems in this section, we consider tensor products of graphs containing odd wheels as induced subgraphs and show that, in each case, the resulting graph is not word-representable.

\subsection{Graphs containing odd wheels}

\begin{theorem}
\label{thm:K4xK4}
If both $G$ and $H$ contain $W_3$ then $G \times H \cong H \times G$ is non-word-representable.
\end{theorem}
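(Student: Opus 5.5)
The plan is to reduce the statement to the single graph $K_4 \times K_4$ and then apply Theorem~\ref{non-comp-neighb}. Since $W_3 \cong K_4$, we read ``contains'' as ``contains as an induced subgraph'', which is how the paper uses the word elsewhere. So $G$ has an induced copy of $K_4$ on some vertices $a_1,\dots,a_4$, and $H$ has one on some vertices $b_1,\dots,b_4$.

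\emph{Step 1 (heredity).} Adjacency in $G\times H$ is decided coordinatewise. Hence for induced subgraphs $A\subseteq G$ and $B\subseteq H$, the subgraph of $G\times H$ induced on $V(A)\times V(B)$ is exactly $A\times B$. Therefore $K_4\times K_4$ is an induced subgraph of $G\times H$. Word-representability is hereditary, so it suffices to show that $K_4\times K_4$ is non-word-representable. Commutativity (Theorem~\ref{thm:commutativity}) gives the statement for $H\times G$ as well.

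\emph{Step 2 (the neighbourhood).} Label $K_4$ by $\{1,2,3,4\}$ and look at the vertex $(1,1)$ of $K_4\times K_4$. Its neighbourhood is $\{(i,j): i,j\in\{2,3,4\}\}$, and $(i,j)\sim(i',j')$ there iff $i\neq i'$ and $j\neq j'$. So $N((1,1))$ induces $K_3\times K_3$, a $4$-regular graph on $9$ vertices. By Theorem~\ref{non-comp-neighb}, it remains to show that $K_3\times K_3$ is not a comparability graph.

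\emph{Step 3 (main obstacle: non-comparability of $K_3\times K_3$).} I would run Gallai's forcing argument. In any transitive orientation, an induced path $x-y-z$ (with $x\not\sim z$) forces $x\to y$ iff $z\to y$. Fix one edge, say $(2,2)\to(3,3)$, and propagate this rule through the many induced $P_3$'s. Each vertex $(i,j)$ has two non-neighbours, $(i,j')$ and $(i',j)$ for the appropriate indices, inside the neighbourhood, so there are plenty of induced $P_3$'s. The goal is a chain of implications that either returns $(3,3)\to(2,2)$ or forces a directed triangle; in both cases no transitive orientation exists. I expect the bookkeeping here to be the delicate part, because local propagations can look consistent for a while; the contradiction has to close up through the triangles of $K_3\times K_3$.

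If the forcing chain does not close cleanly, there are two fallbacks. One is to use the symmetry of $K_3\times K_3$ (it is vertex- and edge-transitive) to reduce to few cases and then check exhaustively that every orientation fails transitivity. The other is to bypass Theorem~\ref{non-comp-neighb} altogether. In that case I would argue directly via Theorem~\ref{thm:kitaev}: take any acyclic orientation of $K_4\times K_4$, look at a source-to-sink structure inside $N((1,1))\cup\{(1,1)\}$, and exhibit a shortcut.
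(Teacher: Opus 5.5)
Your reduction is the paper's reduction. You pass to the induced copy of $W_3\times W_3\cong K_4\times K_4$, then observe that $N((1,1))=\{2,3,4\}^2$ induces $K_3\times K_3$, and finally invoke Theorem~\ref{non-comp-neighb}. Your coordinatewise heredity argument is more explicit than the paper's. Also, because $K_4$ is complete, any copy of it is automatically induced, so the reading of ``contains'' is not an issue.

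The gap is Step 3, which carries the entire content of the theorem and is only announced. You describe Gallai-type forcing, say the bookkeeping will be delicate, and list fallbacks in case the chain ``does not close cleanly''. Nothing in the proposal actually shows that $K_3\times K_3$ is a non-comparability graph, so the proof is incomplete. There is also a small slip in that step: each vertex $(i,j)$ of $K_3\times K_3$ has four non-neighbours, not two. These are $(i,j')$ for both $j'\neq j$ and $(i',j)$ for both $i'\neq i$.

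The forcing closes in five steps, exactly as in the paper. Reversing a transitive orientation gives another transitive orientation, so you may assume $(2,2)\to(3,3)$. Each forcing below uses an induced path $x-y-z$ with $x\not\sim z$:
\begin{itemize}
\item $(3,3)-(2,2)-(3,4)$ and $(3,3)-(2,2)-(4,3)$ force $(2,2)\to(3,4)$ and $(2,2)\to(4,3)$.
\item $(2,2)-(3,4)-(2,3)$ forces $(2,3)\to(3,4)$.
\item $(3,4)-(2,3)-(3,2)$ forces $(2,3)\to(3,2)$.
\item $(2,3)-(3,2)-(4,3)$ forces $(4,3)\to(3,2)$.
\end{itemize}
Now $(2,2)\to(4,3)\to(3,2)$ is a directed path whose endpoints $(2,2)$ and $(3,2)$ share a second coordinate and so are non-adjacent. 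This violates transitivity. Equivalently, the induced path $(2,2)-(4,3)-(3,2)$ puts both orientations of the edge $(4,3)(3,2)$ into one implication class.

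With this chain supplied, your argument coincides with the paper's proof, and the fallbacks are unnecessary. The direct shortcut search in $K_4\times K_4$ via Theorem~\ref{thm:kitaev}, in particular, would be far more laborious.
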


\begin{proof}
We claim that any vertex in $W_3 \times W_3$, which is an induced subgraph of $G \times H$, has a non-comparability neighborhood. Hence, by Theorem~\ref{non-comp-neighb}, $W_3 \times W_3$, and therefore $G \times H$, are not word-representable. 

To complete the proof of the theorem, it suffices to justify the claim for the vertex $(1,1)$ and its neighbourhood formed by the vertices
$(2,2)$, $(2,3)$, $(2,4)$, $(3,2)$, $(3,3)$, $(3,4)$, $(4,2)$, $(4,3)$, $(4,4)$,
which is presented in Figure~\ref{neib-(1,1)}.

\begin{figure}[!t]
\begin{center}
\begin{tikzpicture}[
    scale=1,
    every node/.style={circle, draw, inner sep=1pt, minimum size=12pt},
    every edge/.style={line width=0.4pt}
]

\node (22) at (0,4) {$(2,2)$};
\node (23) at (3,4) {$(2,3)$};
\node (24) at (6,4) {$(2,4)$};

\node (32) at (0,2) {$(3,2)$};
\node (33) at (3,2) {$(3,3)$};
\node (34) at (6,2) {$(3,4)$};

\node (42) at (0,0) {$(4,2)$};
\node (43) at (3,0) {$(4,3)$};
\node (44) at (6,0) {$(4,4)$};

\draw (22)--(33);
\draw (22)--(34);
\draw (22)--(43);

\draw (23)--(32);
\draw (23)--(34);
\draw (23)--(42);
\draw (23)--(44);

\draw (24)--(32);
\draw (24)--(33);
\draw (24)--(43);

\draw (32)--(43);
\draw (32)--(44);

\draw (33)--(42);
\draw (33)--(44);

\draw (34)--(42);
\draw (34)--(43);

\draw[bend left=15] (42) to (24); 
\draw[bend right=15] (22) to (44); 

\end{tikzpicture}
\end{center}
\caption{The neighbourhood of the vertex $(1,1)$ in $W_3 \times W_3$.}
\label{neib-(1,1)}
\end{figure}

Without loss of generality, suppose that $(2,2)\rightarrow (3,3)$ is an edge in a transitive orientation. Since $(2,2)$ is adjacent to both $(3,3)$ and $(3,4)$, while $(3,3)$ is not adjacent to $(3,4)$, transitivity forces $(2,2)\rightarrow (3,4)$.
Similarly, we obtain $(2,2)\rightarrow (4,3)$.

Now, from $(2,2)\rightarrow (3,4)$, it follows that $(2,3)\rightarrow (3,4)$, which in turn implies $(2,3)\rightarrow (3,2)$. This then forces $(4,3)\rightarrow (3,2)$. However, this contradicts transitivity, since there is a directed path of length~2, $(2,2)\rightarrow(4,3)\rightarrow(
3,2)$, while the vertices $(2,2)$ and $(3,2)$ are not adjacent.
\end{proof}

\begin{theorem}
\label{thm:w3}
If $G$ contains $W_3$ and $H$ contains $W_{2m+1}$ for some $m \geq 2$,
then $G \times H \cong H \times G$ is non-word-representable.
\end{theorem}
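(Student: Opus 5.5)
Since an induced subgraph of an induced subgraph is induced, and $G'\times H'$ is an induced subgraph of $G\times H$ whenever $G'$ and $H'$ are induced subgraphs of $G$ and $H$ (adjacency in the product depends only on adjacency in the factors), it suffices to show that $W_3\times W_{2m+1}$ is non-word-representable. By Theorem~\ref{thm:commutativity} the order of the factors is irrelevant. My plan is to exhibit an induced odd wheel $W_{2m+1}$ inside $W_3\times W_{2m+1}$ and then invoke Theorem~\ref{thm:w5}. This is more economical than repeating the non-comparability-neighbourhood argument of Theorem~\ref{thm:K4xK4}.

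\textbf{Setup.} Label $W_3\cong K_4$ by $\{1,2,3,4\}$. Label $W_{2m+1}$ by rim vertices $r_1,\dots,r_{2m+1}$, in cyclic order, and hub $h$.

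\textbf{Centre.} Take $(1,h)$ as the centre of the wheel. Its neighbours are exactly the vertices $(a,r_i)$ with $a\in\{2,3,4\}$, since $1$ is adjacent to $2,3,4$ and $h$ is adjacent to every rim vertex.

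\textbf{Rim.} Fix a proper $3$-colouring $c\colon\{1,\dots,2m+1\}\to\{2,3,4\}$ of the odd cycle $C_{2m+1}$, so $c(i)\neq c(i+1)$ with indices taken mod $2m+1$. For instance, use $2,3,2,3,\dots,2,3,4$. Consider the vertices $x_i=(c(i),r_i)$. They are pairwise distinct because the $r_i$ are distinct.

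\textbf{Key check: the $x_i$ induce $C_{2m+1}$.} For consecutive indices, $x_i$ and $x_{i+1}$ are adjacent, because $c(i)\neq c(i+1)$ gives adjacency in $K_4$ and $r_ir_{i+1}$ is a rim edge. For non-consecutive indices, $r_i$ and $r_j$ are not adjacent in $W_{2m+1}$: here we use $2m+1\ge 5$, so the rim has no chords, and rim vertices are adjacent in $W_{2m+1}$ only to their two rim neighbours and the hub. Hence $x_i x_j$ is not an edge.

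\textbf{Conclusion.} Together with $(1,h)$, which is adjacent to every $x_i$, the vertices $x_1,\dots,x_{2m+1}$ induce $W_{2m+1}$ with $m\ge 2$. By Theorem~\ref{thm:w5}, $W_3\times W_{2m+1}$, and hence $G\times H\cong H\times G$, is non-word-representable.

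The only step requiring care is the inducedness of the rim, which rests on $m\ge 2$ and on the $r_i$ being distinct. I do not expect a genuine obstacle. Should a colouring issue arise, I would fall back on showing that $G[N((1,h))]\cong K_3\times C_{2m+1}$ is a non-comparability graph and applying Theorem~\ref{non-comp-neighb}; this graph contains the same induced odd cycle $C_{2m+1}$ of length at least $5$, which is not a comparability graph, and comparability is hereditary.
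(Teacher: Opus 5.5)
Your proposal is correct and is essentially the paper's proof. The paper also exhibits an induced $W_{2m+1}$ in $W_3\times W_{2m+1}$, with rim $(1,1),(2,2),(1,3),\ldots,(2,2m),(3,2m+1)$ (a proper $3$-colouring of the odd rim by three vertices of $K_4$) and hub $(4,2m+2)$ (the fourth vertex of $K_4$ paired with the hub). This is your construction up to relabelling $K_4$, and you additionally make explicit the hereditary step that $G'\times H'$ is induced in $G\times H$, which the paper uses tacitly.
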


\begin{proof}
We claim that the graph $W_3 \times W_{2m+1}$, and hence $G \times H$, contains $W_{2m+1}$ as an induced subgraph and therefore is not word-representable by Theorem~\ref{thm:w5}. Assuming that $W_5$ is labelled as in Figure~\ref{fig:wheels}, and $W_{2m+1}$ is labelled following the same pattern, we claim that the $2m+2$ vertices in $W_3 \times W_{2m+1}$ shown in Figure~\ref{fig:fig4} form an induced $W_{2m+1}$. It remains to justify this claim.

The rim follows an alternating pattern using only vertices $1, 2, 3$
from $W_3$: the rim vertices are
$(1,1),(2,2),(1,3),(2,4),(1,5),\ldots,(2,2m),(3,2m+1)$.
Each consecutive pair is adjacent in $W_3 \times W_{2m+1}$ because the
$W_3$-coordinates alternate between $1$ and $2$ (which are adjacent
in $W_3$), while the $W_{2m+1}$-coordinates are consecutive on the rim of
$W_{2m+1}$.

The vertex $(4,2m+2)$ serves as the hub because $4$ is the hub of
$W_3$ (adjacent to all of $1,2,3$) and $2m+2$ is the hub of
$W_{2m+1}$ (adjacent to all rim vertices). Hence, $(4,2m+2)$
is adjacent to every rim vertex in $W_3 \times W_{2m+1}$.

Since the $W_{2m+1}$-coordinates of non-adjacent rim pairs are non-adjacent in
$W_{2m+1}$, no chords exist, and the induced subgraph of $W_3 \times W_{2m+1}$ is exactly
$W_{2m+1}$. The proof is complete.
\end{proof}

\begin{figure}[!t]
\centering
\begin{tikzpicture}[
    V/.style={circle, draw, thick, fill=white,
              minimum size=30pt, inner sep=0pt,
              align=center, font=\footnotesize},
    scale=1
]
\node[V] (r1) at (90:2.4)   {$(1,1)$};
\node[V] (r2) at (39:2.4)   {$(2,2)$};
\node[V] (r3) at (347:2.4)  {$(1,3)$};
\node[V] (r4) at (296:2.4)  {$(2,4)$};
\node[V] (r5) at (244:2.4)  {$(1,5)$};
\node[V] (r6) at (193:2.4)  {$(2,2m)$};
\node[V] (r7) at (142:2.4)  {$(3,$\\$2m{+}1)$};
\node[V] (hub) at (0,0)     {$(4,$\\$2m{+}2)$};
\draw[thick] (r1)--(r2)--(r3)--(r4)--(r5);
\draw[thick, dotted, line width=1.2pt] (r5)--(r6);
\draw[thick] (r6)--(r7)--(r1);
\draw[thick] (hub)--(r1) (hub)--(r2) (hub)--(r3) (hub)--(r4)
             (hub)--(r5) (hub)--(r6) (hub)--(r7);
\end{tikzpicture}
\caption{An induced subgraph of the graph $W_3 \times W_{2m+1}$ isomorphic to $W_{2m+1}$.}
\label{fig:fig4}
\end{figure}

\begin{theorem}
\label{thm:wheel}
Let $G$ contain an induced subgraph $W_{2n+1}$ and let $H$ contain an
induced subgraph $W_{2m+1}$, where $n \leq m$ and $m \geq 2$.
Then $G \times H \cong H \times G$ is non-word-representable.
\end{theorem}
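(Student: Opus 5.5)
The plan is to exhibit an induced copy of $W_{2m+1}$ inside $W_{2n+1}\times W_{2m+1}$, which is itself an induced subgraph of $G\times H$. Non-word-representability then follows from Theorem~\ref{thm:w5}, using $m\geq 2$. This generalises the proof of Theorem~\ref{thm:w3}. There the rim of $W_{2m+1}$ was mapped into the triangle of $W_3$; for $n\geq 2$ the rim of $W_{2n+1}$ is an induced odd cycle, so the pattern $1,2,1,2,\ldots,3$ no longer closes up, because $3$ and $1$ are not adjacent. Instead I will use a graph homomorphism from $C_{2m+1}$ onto the rim $C_{2n+1}$ of $W_{2n+1}$. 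Such a homomorphism exists because $n\leq m$. The case $n=1$ is already Theorem~\ref{thm:w3}, although the construction below also works there.

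\emph{Step 1 (the homomorphism).} Label both wheels as in Figure~\ref{fig:wheels}: rim $1,\ldots,2n+1$ with hub $2n+2$ in $W_{2n+1}$, and rim $1,\ldots,2m+1$ with hub $2m+2$ in $W_{2m+1}$. Define $f:\{1,\ldots,2m+1\}\to\{1,\ldots,2n+1\}$ by $f(i)=i$ for $i\leq 2n+1$. For $2n+1<i\leq 2m+1$, set $f(i)=1$ if $i-(2n+1)$ is odd and $f(i)=2n+1$ if it is even. In other words, $f$ wraps once around $C_{2n+1}$ and then oscillates along the edge $\{2n+1,1\}$. Since $2m+1-(2n+1)=2(m-n)$ is even, $f(2m+1)=2n+1$, which is adjacent to $f(1)=1$. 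Every other consecutive pair $f(i),f(i+1)$ is adjacent by construction, so $f$ is a homomorphism $C_{2m+1}\to C_{2n+1}$.

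\emph{Step 2 (the induced wheel).} Take the rim vertices $r_i=(f(i),i)$ for $1\leq i\leq 2m+1$ and the hub $h=(2n+2,2m+2)$. These $2m+2$ vertices are distinct, since their second coordinates are distinct. Rim vertices $r_i$ and $r_{i+1}$ (indices mod $2m+1$) are adjacent, because both coordinates are adjacent. The hub $h$ is adjacent to every $r_i$, because each hub is adjacent to every rim vertex of its own wheel. Rim vertices $r_i,r_j$ that are non-consecutive on $C_{2m+1}$ are non-adjacent, because their second coordinates are non-adjacent rim vertices of $W_{2m+1}$. So there are no chords, and the induced subgraph is exactly $W_{2m+1}$.

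\emph{Step 3.} Since $W_{2n+1}$ and $W_{2m+1}$ are induced in $G$ and $H$ respectively, $W_{2n+1}\times W_{2m+1}$ is induced in $G\times H$. Hence $G\times H$ contains an induced $W_{2m+1}$ with $m\geq 2$, and Theorem~\ref{thm:w5} applies; commutativity (Theorem~\ref{thm:commutativity}) gives the statement for $H\times G$. The only delicate point is Step 1: the parity check that the wrapped map closes up. This is exactly where $n\leq m$ and the oddness of both cycles are used; everything else is routine verification of adjacencies.
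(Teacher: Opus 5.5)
Your proof is correct and follows essentially the paper's argument. The paper also exhibits an induced $W_{2m+1}$ in $W_{2n+1}\times W_{2m+1}$ with hub $(2n+2,2m+2)$ and rim vertices $(f(i),i)$, where $f\colon C_{2m+1}\to C_{2n+1}$ wraps once around the shorter cycle and then oscillates.

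The differences are cosmetic. The paper treats $n=m$ separately via the diagonal. For $n<m$ it oscillates along the edge $\{2n-1,2n\}$ before ending at $2n+1$, whereas you oscillate along $\{2n+1,1\}$ and cover both cases at once. You also spell out the parity check and the fact that $W_{2n+1}\times W_{2m+1}$ is induced in $G\times H$, which the paper leaves implicit.
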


\begin{proof}
We claim that the graph $W_3 \times W_{2m+1}$, and hence $G \times H$, contains $W_{2m+1}$ as an induced subgraph and is therefore not word-representable by Theorem~\ref{thm:w5}. It remains to justify this claim. We consider separately the cases $n = m$ and $n < m$, where we assume that $W_{2n+1}$ and $W_{2m+1}$ are labelled according to the pattern in Figure~\ref{fig:wheels}. \\[-3mm]

\noindent
If $n = m$, consider the vertices
$(1,1),(2,2),\ldots,(2n+1,2n+1)$ as the rim and $(2n+2,2n+2)$ as the
hub, as shown in Figure~\ref{fig:fig2111}. The fact that these vertices form an induced $W_{2n+1}$ is straightforward, since both coordinates come from the same graph. \\[-3mm]

\begin{figure}[!t]
\centering
\begin{tikzpicture}[
    V/.style={circle, draw, thick, fill=white,
              minimum size=30pt, inner sep=0pt,
              align=center, font=\footnotesize},
    scale=1
]
\node[V] (r1) at (90:2.4)   {$(1,1)$};
\node[V] (r2) at (18:2.4)   {$(2,2)$};
\node[V] (r3) at (306:2.4)  {$(3,3)$};
\node[V] (r4) at (234:2.4)  {$(2n,2n)$};
\node[V] (r5) at (162:2.4)  {$(2n{+}1,$\\$2n{+}1)$};
\node[V] (hub) at (0,0)     {$(2n{+}2,$\\$2n{+}2)$};
\draw[thick] (r1)--(r2);
\draw[thick] (r2)--(r3);
\draw[thick, dotted, line width=1.2pt] (r3)--(r4);
\draw[thick] (r4)--(r5);
\draw[thick] (r5)--(r1);
\draw[thick] (hub)--(r1) (hub)--(r2) (hub)--(r3)
             (hub)--(r4) (hub)--(r5);
\end{tikzpicture}
\caption{An induced subgraph of the graph $W_{2n+1} \times W_{2n+1}$ isomorphic to $W_{2n+1}$.}
\label{fig:fig2111}
\end{figure}

\noindent
If $n<m$, we find an induced $W_{2m+1}$ inside $W_{2n+1} \times W_{2m+1}$ as shown in Figure~\ref{fig:fig44111}. 

Indeed, the rim consists of $2m+1$ vertices:
$(1,1),(2,2),\ldots,(2n-1,2n-1),(2n,2n),$ $(2n-1,2n+1),(2n,2n+2),(2n-1,2n+3),(2n,2n+4),\ldots,(2n-1,2m-1),(2n,2m),(2n+1,2m+1)$,
where the pattern alternates between two consecutive rim vertices of
$W_{2n+1}$, namely $2n$ and $2n-1$, after the initial $2n-1$ steps. Each consecutive pair is
adjacent in $W_{2m+1} \times W_{2n+1}$ because both the $W_{2m+1}$- and $W_{2n+1}$-coordinates are
adjacent in their respective graphs.

The vertex $(2n+2, 2m+2)$ is the hub because $2n+2$ is the hub of $W_{2n+1}$ and $2m+2$ is the hub of $W_{2m+1}$. Hence, $(2n+2,2m+2)$ is adjacent to all rim vertices in $W_{2n+1} \times W_{2m+1}$.
Since no chords exist among non-adjacent rim vertex pairs in $W_{2m+1}$, the induced subgraph on the $2m+2$ vertices is exactly $W_{2m+1}$, which completes the proof.
\end{proof}

\begin{figure}[!t]
\centering

\begin{tikzpicture}[
    V/.style={circle, draw, thick, fill=white,
              minimum size=30pt, inner sep=0pt,
              align=center, font=\scriptsize},
    scale=1
]

\node[V] (r1)  at (90:3.0)   {$(1,1)$};
\node[V] (r2)  at (60:3.0)   {$(2,2)$};
\node[V] (r3)  at (30:3.0)   {$(3,3)$};
\node[V] (r4)  at (0:3.0)    {$(2n{-}1,$\\$2n{-}1)$};
\node[V] (r5)  at (330:3.0)  {$(2n,$\\$2n)$};
\node[V] (r6)  at (300:3.0)  {$(2n{-}1,$\\$2n{+}1)$};
\node[V] (r7)  at (270:3.0)  {$(2n,$\\$2n{+}2)$};
\node[V] (r8)  at (240:3.0)  {$(2n{-}1,$\\$2n{+}3)$};
\node[V] (r9)  at (210:3.0)  {$(2n,$\\$2n{+}4)$};
\node[V] (r10) at (180:3.0)  {$(2n{-}1,$\\$2m{-}1)$};
\node[V] (r11) at (150:3.0)  {$(2n,$\\$2m)$};
\node[V] (r12) at (120:3.0)  {$(2n{+}1,$\\$2m{+}1)$};

\node[V, minimum size=35pt] (hub) at (0,0) {$(2n{+}2,$\\$2m{+}2)$};

\draw[thick] (r1)--(r2)--(r3);
\draw[thick, dotted, line width=1.2pt] (r3)--(r4);
\draw[thick] (r4)--(r5)--(r6)--(r7)--(r8)--(r9);
\draw[thick, dotted, line width=1.2pt] (r9)--(r10);
\draw[thick] (r10)--(r11)--(r12)--(r1);

\draw[thick] (hub)--(r1)  (hub)--(r2)  (hub)--(r3)
             (hub)--(r4)  (hub)--(r5)  (hub)--(r6)
             (hub)--(r7)  (hub)--(r8)  (hub)--(r9)
             (hub)--(r10) (hub)--(r11) (hub)--(r12);

\end{tikzpicture}
\caption{An induced subgraph of $W_{2n+1} \times W_{2m+1}$ isomorphic to $W_{2m+1}$ for $n < m$.}
\label{fig:fig44111}
\end{figure}

\begin{example}
In Figure~\ref{fig:fig5} we illustrate the construction in Figure~\ref{fig:fig44111} in the case $n=2$, $m=4$ and $n=3$, $m=5$.
\end{example}

\begin{figure}

\begin{center}
\begin{tabular}{cc}
\begin{tikzpicture}[
    V/.style={circle, draw, thick, fill=white,
              minimum size=28pt, inner sep=0pt,
              align=center, font=\footnotesize},
    scale=1.2
]
\node[V] (r1)  at (90:2.4)   {$(1,1)$};
\node[V] (r2)  at (50:2.4)   {$(2,2)$};
\node[V] (r3)  at (10:2.4)   {$(3,3)$};
\node[V] (r4)  at (330:2.4)  {$(4,4)$};
\node[V] (r5)  at (290:2.4)  {$(3,5)$};
\node[V] (r6)  at (250:2.4)  {$(4,6)$};
\node[V] (r7)  at (210:2.4)  {$(3,7)$};
\node[V] (r8)  at (170:2.4)  {$(4,8)$};
\node[V] (r9)  at (130:2.4)  {$(5,9)$};
\node[V] (hub) at (0,0)      {$(6,10)$};
\draw[thick] (r1)--(r2)--(r3)--(r4)--(r5)--(r6)--(r7)--(r8)--(r9)--(r1);
\draw[thick] (hub)--(r1) (hub)--(r2) (hub)--(r3) (hub)--(r4) (hub)--(r5)
             (hub)--(r6) (hub)--(r7) (hub)--(r8) (hub)--(r9);
\end{tikzpicture}

&

\begin{tikzpicture}[
    V/.style={circle, draw, thick, fill=white,
              minimum size=25pt, inner sep=0pt,
              align=center, font=\footnotesize},
    scale=1.2
]
\node[V] (r1)  at (90:2.4)   {$(1,1)$};
\node[V] (r2)  at (57:2.4)   {$(2,2)$};
\node[V] (r3)  at (24:2.4)   {$(3,3)$};
\node[V] (r4)  at (351:2.4)  {$(4,4)$};
\node[V] (r5)  at (318:2.4)  {$(5,5)$};
\node[V] (r6)  at (285:2.4)  {$(6,6)$};
\node[V] (r7)  at (252:2.4)  {$(5,7)$};
\node[V] (r8)  at (219:2.4)  {$(6,8)$};
\node[V] (r9)  at (186:2.4)  {$(5,9)$};
\node[V] (r10) at (153:2.4)  {$(6,10)$};
\node[V] (r11) at (120:2.4)  {$(7,11)$};
\node[V] (hub) at (0,0)      {$(8,12)$};
\draw[thick] (r1)--(r2)--(r3)--(r4)--(r5)--(r6)--(r7)--(r8)--(r9)--(r10)--(r11)--(r1);
\draw[thick] (hub)--(r1)  (hub)--(r2)  (hub)--(r3)  (hub)--(r4)  (hub)--(r5)
             (hub)--(r6)  (hub)--(r7)  (hub)--(r8)  (hub)--(r9)  (hub)--(r10)
             (hub)--(r11);
\end{tikzpicture}

\end{tabular}

\end{center}

\caption{An induced subgraph $W_{9}$ of $W_{5}\times W_{9}$ (left) and an induced subgraph $W_{11}$ of $W_{7}\times W_{11}$ (right).}
\label{fig:fig5}
\end{figure}

\subsection{A graph containing an odd wheel and a graph containing $K_n$, $\mu_n$, or $\mu'_n$}

Before the following theorem, note that $G \times K_n$ is word-representable for any graph $G$ and $n \leq 3$, since $G \times K_n$ is then 3-colourable and Corollary~\ref{cor:3col} can be applied.

\begin{theorem}
\label{thm:w3-1}
If $G$ contains $W_{2m+1}$ for some $m \geq 1$ and $H$ contains $K_n$ for $n \geq 4$,
then $G \times H \cong H \times G$ is non-word-representable.
\end{theorem}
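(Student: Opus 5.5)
The plan is to reduce to the two cases already handled and use the fact that non-word-representability is hereditary.

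First, observe that $K_n$ for $n\ge 4$ contains $K_4\cong W_3$ as an induced subgraph. Hence $H$ contains $W_3$. Also, the wheel $W_{2m+1}$ contained in $G$ is an induced subgraph of $G$.

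We then split on $m$:
\begin{itemize}
\item If $m=1$, then $G$ contains $W_3$. Together with $W_3$ in $H$, Theorem~\ref{thm:K4xK4} applies directly.
\item If $m\ge 2$, then $G$ contains $W_{2m+1}$ and $H$ contains $W_3$. This is exactly the setting of Theorem~\ref{thm:w3}, with the roles of the factors interchanged via Theorem~\ref{thm:commutativity}.
\end{itemize}

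The only point to verify carefully is that induced subgraphs pass to the tensor product. If $A$ is an induced subgraph of $G$ and $B$ is an induced subgraph of $H$, then $A\times B$ is the subgraph of $G\times H$ induced on $V(A)\times V(B)$. This holds because adjacency of $(u_1,v_1)$ and $(u_2,v_2)$ depends only on $u_1u_2\in E(G)$ and $v_1v_2\in E(H)$, and these coincide with adjacency in $A$ and in $B$.

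Consequently $G\times H$ contains either $W_3\times W_3$ or $W_{2m+1}\times W_3$ as an induced subgraph. By the cited theorems (non-comparability neighbourhood, respectively an induced $W_{2m+1}$ together with Theorem~\ref{thm:w5}), $G\times H$ is non-word-representable.

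There is essentially no obstacle. The main care needed is remembering that Theorems~\ref{thm:K4xK4} and~\ref{thm:w3} are phrased with "contains", which should be read as containing an induced copy, and that both already incorporate the hereditary argument.
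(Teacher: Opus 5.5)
Your proof is correct and matches the paper's, which likewise notes that $H$ contains $K_4\cong W_3$ and then invokes Theorems~\ref{thm:K4xK4} and~\ref{thm:w3}. Your explicit split into $m=1$ and $m\ge 2$, and your check that induced subgraphs of the factors yield induced subgraphs of the product, spell out what the paper's one-line proof leaves implicit.
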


\begin{proof}
The graph $H$ contains $K_4 \cong W_3$, and Theorems~\ref{thm:K4xK4} and~\ref{thm:w3} yield the result.
\end{proof}

\begin{theorem}
\label{thm:mu2n1_k4}
Let $G$ contain $\mu_{2n+1}$ or $\mu'_{2n+1}$ for $n \geq 1$ and let $H$ contain
$W_3$. Then $G \times H \cong H\times G$ is non-word-representable.
\end{theorem}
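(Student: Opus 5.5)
The plan is to show that $\mu_{2n+1}\times K_4$ (and likewise $\mu'_{2n+1}\times K_4$) contains an induced copy of $\mu_{2n+1}$ (respectively $\mu'_{2n+1}$). This mirrors the proof of Theorem~\ref{thm:w3}, where an induced $W_{2m+1}$ was found inside $W_3\times W_{2m+1}$. Non-word-representability then follows from Remark~\ref{odd-M-non-wr} (and the analogous remark for $\mu'$), because non-word-representability is hereditary.

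There are two structural facts to record first.

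\emph{Products of induced subgraphs are induced.} If $G'$ is an induced subgraph of $G$ and $H'$ is an induced subgraph of $H$, then $G'\times H'$ is an induced subgraph of $G\times H$, since adjacency in the product is decided coordinatewise. Since $W_3\cong K_4$, it therefore suffices to treat $\mu_{2n+1}\times K_4$ and $\mu'_{2n+1}\times K_4$.

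\emph{Colourings give induced copies.} Let $F$ be any graph and $f\colon V(F)\to\{1,2,3,4\}$ a proper colouring. Then the vertex set $\{(x,f(x)) : x\in V(F)\}$ induces a copy of $F$ in $F\times K_4$. Indeed, $(x,f(x))$ and $(y,f(y))$ are adjacent if and only if $xy\in E(F)$ and $f(x)\neq f(y)$. Because $f$ is proper, the second condition is automatic whenever $xy\in E(F)$, so adjacency reduces to $xy\in E(F)$. Thus $F\times K_4$ contains $F$ as an induced subgraph whenever $\chi(F)\le 4$.

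It remains to write down explicit proper $4$-colourings.

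For $\mu_{2n+1}$, colour the top cycle $v_1,\dots,v_{2n+1}$ by $1,2,1,2,\dots,1,2,3$, so that $v_{2n+1}$ receives colour $3$. Give each shadow $u_i$ the colour of $v_i$; this is proper because $u_i$ is adjacent exactly to the cycle-neighbours of $v_i$ (plus $w$), and shadows are pairwise non-adjacent. Finally give $w$ colour $4$.

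For $\mu'_{2n+1}$ with $n\ge 2$, colour the top cycle as before with $1,2,3$. Give every shadow $u_i$ colour $4$, which is legitimate since the shadows form an independent set. Give $w$ colour $1$, which is legitimate since $w$ has no top-row neighbours. For $n=1$ we have $\mu'_3=\mu_3$, so the previous colouring applies.

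Each case then needs only a routine check that the colouring is proper, in the style of the edge-checks in Theorem~\ref{thm:w3}. I expect the only real care point to be the $\mu'$ colouring, since a shadow there sees almost the whole top cycle. Placing all shadows in the fresh colour $4$ and moving $w$ to colour $1$ handles this.
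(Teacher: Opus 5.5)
Your proof is correct and is essentially the paper's argument. The paper exhibits the induced copy of $\mu_{2n+1}$ in $\mu_{2n+1}\times W_3$ on the vertex set $\{(x,f(x))\}$, where $f$ colours the top row $1,2,\ldots,1,2,3$, every shadow $4$ and the root $1$; this is exactly your $\mu'$ colouring, and your lemma that the graph of a proper $4$-colouring induces a copy of $F$ in $F\times K_4$ supplies the verification the paper only calls straightforward (and makes explicit the $\mu'$ case the paper omits).
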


\begin{proof}
We show that $\mu_{2n+1} \times W_3$ contains $\mu_{2n+1}$ as an induced subgraph, and hence so does  $G \times H$. Therefore $G \times H$ is non-word-representable by Remark~\ref{odd-M-non-wr}. The argument for $\mu'_{2n+1} \times W_3$ is identical, and therefore is omitted.

The graph $\mu_{2n+1}$ has $4n+3$ vertices arranged in three layers: the top row $1,2,\ldots,2n+1$ forming the cycle $C_{2n+1}$, the shadow row $2n+2,2n+3,\ldots,4n+2$, and the root vertex $4n+3$. The graph $W_3$ has vertex set $\{1,2,3,4\}$ with all edges present. The induced copy of $\mu_{2n+1}$ in $\mu_{2n+1} \times W_3$ consists of the following vertices.

The top row consists of
\[
(1,1),\ (2,2),\ (3,1),\ (4,2),\ \ldots,\ (2n-1,1),\ (2n,2),\ (2n+1,3),
\]
the shadow row consists of
\[
(2n+2,4),\ (2n+3,4),\ (2n+4,4),\ \ldots,\ (4n+2,4),
\]
and the root vertex is $(4n+3,1)$.

It is straightforward to verify that the subgraph induced by these $4n+3$ vertices is isomorphic to $\mu_{2n+1}$, completing the proof.
\end{proof}
\begin{example}
In Figure~\ref{fig:fig12}, we illustrate the construction of the non-word-representable induced subgraph in the proof of Theorem~\ref{thm:mu2n1_k4} for the case $n=2$.
\end{example}

\begin{figure}[!t]
\centering

\begin{tikzpicture}[
    V/.style={circle, draw, thick, fill=white,
              minimum size=30pt, inner sep=0pt, font=\footnotesize},
    scale=0.9
]
\node[V] (r11) at (0,   0)  {$(1,1)$};
\node[V] (r22) at (2.2, 0)  {$(2,2)$};
\node[V] (r31) at (4.4, 0)  {$(3,1)$};
\node[V] (r42) at (6.6, 0)  {$(4,2)$};
\node[V] (r53) at (8.8, 0)  {$(5,3)$};
\node[V] (s64)  at (0,  -3.2){$(6,4)$};
\node[V] (s74)  at (2.2,-3.2){$(7,4)$};
\node[V] (s84)  at (4.4,-3.2){$(8,4)$};
\node[V] (s94)  at (6.6,-3.2){$(9,4)$};
\node[V] (s104) at (8.8,-3.2){$(10,4)$};
\node[V] (root) at (4.4,-5.0){$(11,1)$};

\draw[thick] (r11)--(r22)--(r31)--(r42)--(r53);
\draw[thick, bend left=22] (r11) to (r53);

\draw[thick] (s64)--(r22)  (s64)--(r53);
\draw[thick] (s74)--(r11)  (s74)--(r31);
\draw[thick] (s84)--(r22)  (s84)--(r42);
\draw[thick] (s94)--(r31)  (s94)--(r53);
\draw[thick] (s104)--(r42) (s104)--(r11);

\draw[thick] (root)--(s64) (root)--(s74) (root)--(s84)
             (root)--(s94) (root)--(s104);

\end{tikzpicture}
\caption{An induced subgraph $\mu_5$ in $\mu_5 \times W_3$.}
\label{fig:fig12}
\end{figure}

\section{Remaining cases in our studies}\label{remaining-sec}

\subsection{Word-representability of tensor products with  complete graphs}

\begin{theorem}
\label{thm:kn_km}
Let $G = K_n \times K_m$. Then $G$ is word-representable if and only
if $\min\{n,m\} \leq 3$.
\end{theorem}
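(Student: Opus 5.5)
The two directions use different tools. For the ``if'' direction, suppose without loss of generality (by Theorem~\ref{thm:commutativity}) that $n\le 3$. Then $K_n$ is $n$-colourable, so by Theorem~\ref{thm:chromatic} we get $\chi(K_n\times K_m)\le \min\{n,m\}\le 3$. Hence $K_n\times K_m$ is $3$-colourable and therefore word-representable by Corollary~\ref{cor:3col}. This direction also covers the degenerate cases $n\le 2$, where the product is bipartite or edgeless.

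For the ``only if'' direction, assume $n,m\ge 4$. Then $K_4\cong W_3$ is an induced subgraph of both $K_n$ and $K_m$, since any four vertices of a complete graph induce $K_4$. We need that if $G'$ is induced in $G$ and $H'$ is induced in $H$, then $G'\times H'$ is induced in $G\times H$. This holds because adjacency in the tensor product is defined coordinatewise, so it is decided entirely inside $G'$ and $H'$. Consequently $K_4\times K_4\cong W_3\times W_3$ is an induced subgraph of $K_n\times K_m$. Theorem~\ref{thm:K4xK4} then applies with $G=K_n$ and $H=K_m$ and gives non-word-representability.

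If one wanted a self-contained argument, the key step would be the one inside Theorem~\ref{thm:K4xK4}. There the neighbourhood of $(1,1)$ in $K_4\times K_4$ is the graph on $\{2,3,4\}^2$ in which $(a,b)\sim(c,d)$ iff $a\ne c$ and $b\ne d$, i.e.\ $K_3\times K_3$. One shows this graph has no transitive orientation by the forcing argument already given, and then applies Theorem~\ref{non-comp-neighb}. Since this is already established earlier in the paper, I expect no real obstacle. The only point needing care is the hereditary step: non-word-representability passes from an induced subgraph to the whole graph, because deleting letters from a word representing a graph yields a word representing the induced subgraph.
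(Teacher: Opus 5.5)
Your proposal is correct and follows the paper's proof. The ``if'' direction uses the chromatic bound $\chi(K_n\times K_m)\le\min\{n,m\}\le 3$ together with Corollary~\ref{cor:3col}, and the ``only if'' direction locates $K_4\times K_4\cong W_3\times W_3$ as an induced subgraph and invokes Theorem~\ref{thm:K4xK4}; you additionally spell out that induced subgraphs of the factors give an induced subgraph of the product and that the neighbourhood of $(1,1)$ is $K_3\times K_3$, which the paper leaves implicit.
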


\begin{proof}
If $\min\{n,m\} \leq 3$, then $G$ is 3-colourable and hence word-representable by Corollary~\ref{cor:3col}.

Conversely, if $\min\{n,m\} > 3$, then $G$ contains $K_4 \times K_4 \cong W_3 \times W_3$, and the result follows from Theorem~\ref{thm:K4xK4}.
\end{proof}

\begin{corollary}
\label{cor-Kn-Km}
If $G$ and $H$ contain $K_n$ and $K_m$, respectively, for $n,m \geq 4$, then $G \times H \cong H \times G$ is non-word-representable.
\end{corollary}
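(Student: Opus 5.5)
The plan is to reduce the corollary to Theorem~\ref{thm:kn_km}, using the hereditary nature of non-word-representability. Suppose $G$ contains $K_n$ and $H$ contains $K_m$ with $n,m\ge 4$. Since any subgraph isomorphic to a complete graph is automatically induced, we can pick vertex sets $A\subseteq V(G)$ and $B\subseteq V(H)$ with $|A|=|B|=4$, each inducing a copy of $K_4\cong W_3$.

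The first step is to check that the tensor product respects induced subgraphs. If $G'=G[A]$ and $H'=H[B]$, then $G'\times H'$ equals the subgraph of $G\times H$ induced by $A\times B$. This follows directly from the definition: two vertices $(u_1,v_1)$ and $(u_2,v_2)$ with $u_i\in A$ and $v_i\in B$ are adjacent in $G\times H$ if and only if $u_1u_2\in E(G)$ and $v_1v_2\in E(H)$. Because $G'$ and $H'$ are induced, this is equivalent to $u_1u_2\in E(G')$ and $v_1v_2\in E(H')$. Hence $K_4\times K_4$ is an induced subgraph of $G\times H$.

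The second step uses Theorem~\ref{thm:kn_km} with $n=m=4$, or equivalently Theorem~\ref{thm:K4xK4}, to conclude that $K_4\times K_4$ is non-word-representable. Word-representability is hereditary, since deleting letters from a representing word yields a representing word for the induced subgraph. Therefore $G\times H$ is non-word-representable. The isomorphism $G\times H\cong H\times G$ comes from Theorem~\ref{thm:commutativity}.

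There is no real obstacle here. The only point that needs to be stated explicitly is the verification in the first step that induced subgraphs of the factors give induced subgraphs of the product. Alternatively, Theorem~\ref{thm:K4xK4} applies verbatim, because both $G$ and $H$ contain $W_3\cong K_4$.
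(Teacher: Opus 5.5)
Your proof is correct and follows the paper's route: the corollary comes straight from Theorem~\ref{thm:kn_km}, or directly from Theorem~\ref{thm:K4xK4} since $K_n$ contains $K_4\cong W_3$, by finding an induced $K_4\times K_4$ in $G\times H$ and using that non-word-representability passes to any graph containing it as an induced subgraph. Your explicit check that $G[A]\times H[B]$ is exactly the subgraph of $G\times H$ induced by $A\times B$ is a detail the paper leaves implicit.
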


\subsection{Word-representability of tensor products with (extended) Mycielski of cycle graphs}

\begin{theorem}
\label{thm:m2n_wr}
Let $G$ be any graph. Then $\mu_{2n}\times G\cong G \times \mu_{2n}$ ($\mu'_{2n}\times G\cong G \times \mu'_{2n}$) is word-representable for any $n \geq 2$. 
\end{theorem}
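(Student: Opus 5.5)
The plan mirrors the proof of Theorem~\ref{thm:w2n_wr}: show that $\mu_{2n}$ and $\mu'_{2n}$ are $3$-colourable. Theorem~\ref{thm:chromatic} then makes $\mu_{2n}\times G$ $3$-colourable for every graph $G$, and Corollary~\ref{cor:3col} gives word-representability. Commutativity, Theorem~\ref{thm:commutativity}, handles $G\times\mu_{2n}$.

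\textbf{Colouring $\mu_{2n}$.} Label the top cycle $v_1,\dots,v_{2n}$ and give it the proper $2$-colouring $c(v_i)=i \bmod 2$, using colours $0$ and $1$.
\begin{itemize}
\item \emph{Shadow vertices.} Each $u_i$ is adjacent only to $v_{i-1}$ and $v_{i+1}$ (indices modulo $2n$). These have the same parity, so both carry colour $1-c(v_i)$. Hence setting $c(u_i)=c(v_i)$ is proper. The shadow vertices are pairwise non-adjacent, so nothing else needs checking in that layer.
\item \emph{Root.} The root $w$ is adjacent only to shadow vertices, which use colours $0$ and $1$. Give $w$ colour $2$.
\end{itemize}
This is a proper $3$-colouring. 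It is the standard fact $\chi(\mu(G))\le\chi(G)+1$ applied with $\chi(C_{2n})=2$.

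\textbf{Colouring $\mu'_{2n}$.} Here the colouring above fails, because $u_i$ is adjacent to every top vertex except $v_i$. In particular $u_i$ sees top vertices of both parities, so it cannot take colour $0$ or $1$. Instead give every shadow vertex colour $2$; this is proper since the shadow layer is independent. The root $w$ is adjacent only to shadow vertices, so it can take colour $0$. This again gives a proper $3$-colouring. Alternatively, one could quote Hameed's result that $\mu'_{2n}$ is semi-transitive, but Theorem~\ref{thm:chromatic} needs a colouring bound, so the explicit colouring is what is used.

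The only step requiring care is this adjustment for the extended Mycielskian, where the shadow colours must be swapped with the root colour. Everything else follows directly from the earlier results, exactly as in the wheel case.
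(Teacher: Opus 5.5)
Your proposal is correct and is the paper's own argument: $\mu_{2n}$ and $\mu'_{2n}$ are $3$-colourable, so Theorem~\ref{thm:chromatic} makes the product $3$-colourable, and Corollary~\ref{cor:3col} then gives word-representability. The paper only asserts the $3$-colourability. Your explicit colourings supply that missing detail correctly, in particular for $\mu'_{2n}$, where every shadow vertex gets the third colour and the root reuses one of the cycle's colours.
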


\begin{proof}
Since $\mu_{2n}$ (resp., $\mu'_{2n}$) is $3$-colourable, by Theorem~\ref{thm:chromatic} the graph $\mu_{2n} \times G$ (resp.,  $\mu'_{2n} \times G$) is $3$-colourable, and by Corollary~\ref{cor:3col} it is word-representable.
\end{proof}

\begin{theorem}
\label{thm:mu2n1_mu2m1}
If $G$ contains $\mu_{2n+1}$ or $\mu'_{2n+1}$ and $H$ contains $\mu_{2m+1}$ where $1\leq n \leq m$.
Then $G \times H \cong H \times G$ is non-word-representable.
\end{theorem}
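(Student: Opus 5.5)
The plan is to find an induced copy of $\mu_{2m+1}$ inside $\mu_{2n+1}\times\mu_{2m+1}$ (respectively $\mu'_{2n+1}\times\mu_{2m+1}$). Since induced subgraphs of $G$ and $H$ give an induced subgraph of $G\times H$, the statement then follows from Remark~\ref{odd-M-non-wr}. I would use the same device as in the proofs of Theorems~\ref{thm:w3}, \ref{thm:wheel} and~\ref{thm:mu2n1_k4}, stated in general form.

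\textbf{General principle.} Let $\varphi: H'\to G'$ be any graph homomorphism. Then the vertices $(\varphi(x),x)$, $x\in V(H')$, induce a copy of $H'$ in $G'\times H'$.
\begin{itemize}
\item The vertices are pairwise distinct because their second coordinates are.
\item If $xy\in E(H')$, then $\varphi(x)\varphi(y)\in E(G')$, so $(\varphi(x),x)$ and $(\varphi(y),y)$ are adjacent.
\item If $xy\notin E(H')$, the second coordinates are non-adjacent, so the pair is non-adjacent regardless of $\varphi$.
\end{itemize}
So it suffices to construct a homomorphism $\varphi:\mu_{2m+1}\to\mu_{2n+1}$, and one $\mu_{2m+1}\to\mu'_{2n+1}$.

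\textbf{Construction.} First define a homomorphism $g: C_{2m+1}\to C_{2n+1}$ by folding, exactly as in Figure~\ref{fig:fig44111}:
\begin{itemize}
\item $g(i)=i$ for $1\le i\le 2n$;
\item for $2n< i\le 2m$, set $g(i)=2n-1$ if $i$ is odd and $g(i)=2n$ if $i$ is even;
\item $g(2m+1)=2n+1$.
\end{itemize}
Consecutive vertices go to adjacent vertices. The wrap-around edges $\{2m,2m+1\}$ and $\{2m+1,1\}$ go to $\{2n,2n+1\}$ and $\{2n+1,1\}$. When $n=m$, $g$ is simply the identity.

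Then set
\[
\varphi(v_i)=v_{g(i)},\qquad \varphi(u_i)=u_{g(i)},\qquad \varphi(w)=w .
\]
In coordinates, the induced copy is $(\varphi(x),x)$ for each $x$, with the root being the pair of roots.

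\textbf{Verification.}
\begin{itemize}
\item Top-row edges are preserved because $g$ is a homomorphism.
\item Root–shadow edges are preserved trivially.
\item For a shadow edge $u_iv_{i\pm1}$ of $\mu_{2m+1}$, the image is $u_{g(i)}v_{g(i\pm1)}$. Here $g(i\pm1)$ is a cycle-neighbour of $g(i)$, so this is an edge of $\mu_{2n+1}$.
\item It is also an edge of $\mu'_{2n+1}$, since $g(i\pm1)\ne g(i)$ and in $\mu'_{2n+1}$ each $u_j$ is adjacent to every $v_k$ with $k\neq j$.
\end{itemize}
The only point that needs care is this shadow-edge check, together with the wrap-around edges at $v_{2m+1}$. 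Non-adjacency needs no checking because of the general principle.

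This asymmetry is also why the statement requires $H$ to contain $\mu_{2m+1}$ rather than $\mu'_{2m+1}$. In $\mu'_{2m+1}$, $u_i$ is adjacent to all $v_j$ with $j\neq i$, and the folding can give $g(j)=g(i)$ for such $j$ (for example $g(2n+1)=g(2n-1)$). The image $u_{g(i)}v_{g(j)}=u_{g(i)}v_{g(i)}$ is not an edge, so the homomorphism fails when $n<m$.
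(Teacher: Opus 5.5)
Your proof is correct and follows essentially the same route as the paper: both exhibit an induced copy of $\mu_{2m+1}$ inside $\mu_{2n+1}\times\mu_{2m+1}$ whose second coordinates run once through $\mu_{2m+1}$ and whose first coordinates fold $C_{2m+1}$ onto $C_{2n+1}$. The paper's fold alternates between $2n$ and $2n+1$ rather than your $2n-1$ and $2n$, which makes no difference. Your graph-of-a-homomorphism formulation is tidier than the paper's in three ways: non-adjacency becomes automatic, the rule $u_i\mapsto u_{g(i)}$ gives a consistent shadow row where the paper's displayed shadow list contains typos, and the $\mu'_{2n+1}$ case (which the paper only asserts is identical) follows because $\mu_{2n+1}$ is a spanning subgraph of $\mu'_{2n+1}$.
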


\begin{proof}
We assume that $G$ contains $\mu_{2n+1}$; all our arguments are identical for the case when $G$ contains $\mu'_{2n+1}$. It is sufficient to prove that $\mu_{2n+1} \times \mu_{2n+1}$ is not word-representable.

If $n = m$, then $\mu_{2n+1} \times \mu_{2n+1}$ clearly contains $\mu_{2n+1}$ as an induced subgraph on the vertices $(1,1), (2,2), \ldots, (4n+3,4n+3)$, which is non-word-representable. Hence, $G \times H$ is not word-representable in this case.

Now assume $n < m$. Then $\mu_{2n+1} \times \mu_{2m+1}$ contains $\mu_{2m+1}$ as an induced subgraph given by the following vertices.

The top row of this subgraph consists of the vertices
\begin{align*}
& (1,1),\ (2,2),\ \ldots,\ (2n+1,2n+1),\ (2n,2n+2),\ (2n+1,2n+3),\ (2n,2n+4),\ \ldots, \\
& (2n+1,2m-1),\ (2n,2m),\ (2n+1,2m+1),
\end{align*}
the shadow row consists of
\begin{align*}
& (2n+2,2m+2),\ (2n+3,2m+3),\ \ldots,\ (4n+1,4m+1),\ (4n,4m+2),\ (4n+1,4m+3), \\
& (4n,4m+1),\ \ldots,\ (4n,4m),\ (4n+1,4m+1),\ (4n+2,4m+2),
\end{align*}
and the root vertex is $(4n+3,4m+3)$.

We verify that the induced subgraph on these vertices is isomorphic to $\mu_{2m+1}$.

First, the vertices in the top row form a cycle of length $2m+1$. Indeed, by construction, consecutive vertices are connected in both coordinates and hence are adjacent in $\mu_{2n+1} \times \mu_{2m+1}$, while non-consecutive vertices fail this condition and are not adjacent. Thus, the top row induces $C_{2m+1}$.

Second, each vertex in the shadow row corresponds uniquely to a vertex in the top row, and adjacency between the top row and the shadow row follows the same rule as in the Mycielskian construction: a shadow vertex is adjacent precisely to the neighbours of its corresponding top vertex. This follows directly from the definition of the tensor product.

Third, the root vertex $(4n+3,4m+3)$ is adjacent to all vertices in the shadow row and to no vertex in the top row, again matching the definition of the Mycielskian.

Finally, no additional edges are present among these vertices, since any pair not prescribed above fails the adjacency condition in the second coordinate. Hence, the subgraph is induced. Therefore, the induced subgraph is isomorphic to $\mu_{2m+1}$, which completes the proof.
\end{proof}

\begin{figure}[!t]
\centering

\begin{tikzpicture}[
    V/.style={circle, draw, thick, fill=white,
              minimum size=24pt, inner sep=0pt, font=\footnotesize},
    scale=1.0
]
\node[V] (r11) at (0,   0)   {$(1,1)$};
\node[V] (r22) at (1.8, 0)   {$(2,2)$};
\node[V] (r33) at (3.6, 0)   {$(3,3)$};
\node[V] (r24) at (5.4, 0)   {$(2,4)$};
\node[V] (r35) at (7.2, 0)   {$(3,5)$};
\node[V] (r26) at (9.0, 0)   {$(2,6)$};
\node[V] (r37) at (10.8,0)   {$(3,7)$};
\node[V] (s48)  at (0,  -3.0) {$(4,8)$};
\node[V] (s59)  at (1.8,-3.0) {$(5,9)$};
\node[V] (s610) at (3.6,-3.0) {$(6,10)$};
\node[V] (s511) at (5.4,-3.0) {$(5,11)$};
\node[V] (s612) at (7.2,-3.0) {$(6,12)$};
\node[V] (s513) at (9.0,-3.0) {$(5,13)$};
\node[V] (s614) at (10.8,-3.0){$(6,14)$};
\node[V] (root) at (5.4,-5) {$(7,15)$};

\draw[thick] (r11)--(r22)--(r33)--(r24)--(r35)--(r26)--(r37);
\draw[thick, bend left=18] (r11) to (r37);

\draw[thick] (s48)--(r22)  (s48)--(r37);
\draw[thick] (s59)--(r11)  (s59)--(r33);
\draw[thick] (s610)--(r22) (s610)--(r24);
\draw[thick] (s511)--(r33) (s511)--(r35);
\draw[thick] (s612)--(r24) (s612)--(r26);
\draw[thick] (s513)--(r35) (s513)--(r37);
\draw[thick] (s614)--(r26) (s614)--(r11);

\draw[thick] (root)--(s48)  (root)--(s59)  (root)--(s610)
             (root)--(s511) (root)--(s612) (root)--(s513)
             (root)--(s614);

\end{tikzpicture}
\caption{An induced subgraph $\mu_7$ in $\mu_3 \times \mu_7$.}
\label{fig:fig10}
\end{figure}


\begin{example}
In Figures~\ref{fig:fig10} and~\ref{fig:fig11}, we illustrate Theorem~\ref{thm:mu2n1_mu2m1} for $n = 1$ and
$m = 3$ (that is, $\mu_3 \times \mu_7$), and for $n = 2$ and
$m = 3$ (that is, $\mu_5 \times \mu_7$).
\end{example}

\begin{remark}
Note that our construction in the proof of Theorem~\ref{thm:mu2n1_mu2m1} does not work for the graph $\mu'_{5}\times\mu'_{7}$, and more generally for  $\mu'_{2n+1}\times\mu'_{2m+1}$. Indeed, referring to Figure~\ref{fig:fig11}, note that there is an edge between $(6,8)$ and $(3,3)$ in  $\mu'_{5}\times\mu'_{7}$ while there is no edge between $(3,3)$ and $(8,12)$ in $\mu'_{5}\times\mu'_{7}$. Therefore, the induced subgraph formed by the vertices in Figure~\ref{fig:fig11} in $\mu'_{5}\times\mu'_{7}$ cannot be $\mu_{7}$ or $\mu'_{7}$.  Similarly, our construction in the proof of Theorem~\ref{thm:mu2n1_mu2m1} does not work for the graph $\mu_{2n+1}\times\mu'_{2m+1}$ for $n<m$.
\end{remark}

\begin{figure}[!t]
\centering

\begin{tikzpicture}[
    V/.style={circle, draw, thick, fill=white,
              minimum size=25pt, inner sep=0pt, font=\footnotesize},
    scale=0.95
]
\node[V] (r11) at (0,   0)  {$(1,1)$};
\node[V] (r22) at (2.0, 0)  {$(2,2)$};
\node[V] (r33) at (4.0, 0)  {$(3,3)$};
\node[V] (r44) at (6.0, 0)  {$(4,4)$};
\node[V] (r55) at (8.0, 0)  {$(5,5)$};
\node[V] (r46) at (10.0,0)  {$(4,6)$};
\node[V] (r57) at (12.0,0)  {$(5,7)$};
\node[V] (s68)  at (0,  -3.4){$(6,8)$};
\node[V] (s79)  at (2.0,-3.4){$(7,9)$};
\node[V] (s810) at (4.0,-3.4){$(8,10)$};
\node[V] (s911) at (6.0,-3.4){$(9,11)$};
\node[V] (s812) at (8.0,-3.4){$(8,12)$};
\node[V] (s913) at (10.0,-3.4){$(9,13)$};
\node[V] (s1014) at (12.0,-3.4){$(10,14)$};
\node[V] (root) at (6.0,-5.5){$(11,15)$};

\draw[thick] (r11)--(r22)--(r33)--(r44)--(r55)--(r46)--(r57);
\draw[thick, bend left=18] (r11) to (r57);

\draw[thick] (s68)--(r22)  (s68)--(r57);
\draw[thick] (s79)--(r11)  (s79)--(r33);
\draw[thick] (s810)--(r22) (s810)--(r44);
\draw[thick] (s911)--(r33) (s911)--(r55);
\draw[thick] (s812)--(r44) (s812)--(r46);
\draw[thick] (s913)--(r55) (s913)--(r57);
\draw[thick] (s1014)--(r46) (s1014)--(r11);

\draw[thick] (root)--(s68)   (root)--(s79)   (root)--(s810)
             (root)--(s911)  (root)--(s812)  (root)--(s913)
             (root)--(s1014);

\end{tikzpicture}
\caption{An induced subgraph $\mu_7$  in $\mu_5 \times \mu_7$.}
\label{fig:fig11}
\end{figure}


\section{Conclusion}\label{conclusion-sec}

In this paper, we initiated a systematic study of the word-representability of tensor products of graphs. Our results cover four fundamental graph families. However, our classification does not address the following case, which we state as an open problem. (Recall that in Theorem~\ref{thm:mu2n1_k4} we proved that $\mu_{2n+1} \times W_3$ is non-word-representable for all $n \geq 1$.)

\begin{problem}\label{prob1}
Is it true that $\mu_{2n+1} \times W_{2m+1}$ and $\mu'_{2n+1} \times W_{2m+1}$ is always non-word-representable for $n \geq 1$ and $m \geq 2$?
\end{problem}

\begin{problem}\label{prob2}
Is it true that $\mu_{2n+1} \times \mu'_{2m+1} $  is always non-word-representable for $n<m$?
\end{problem}

\begin{problem}\label{prob3}
Is it true that $\mu'_{2n+1} \times \mu'_{2m+1} $  is always non-word-representable for $n<m$?
\end{problem}

We conclude with the following open problem, which is the only remaining unsolved question from Problem~7.2.5 in \cite{kitaev2025}.

\begin{problem}
Is it possible for the tensor product of two non-word-representable graphs to be word-representable?
\end{problem}


\end{document}